\newtheorem{theorem}{Theorem}[section]
\newtheorem{proposition}[theorem]{Proposition}
\newtheorem{lemma}[theorem]{Lemma}
\newtheorem{definition}[theorem]{Definition}
\newtheorem{example}[theorem]{Example}
\newtheorem{remark}[theorem]{Remark}
\newenvironment{proof}{\smallskip\par{\sc Proof.}\enspace}%
 {{\unskip\nobreak\hfil\penalty50\hskip2em
          \hbox{}\nobreak\hfil{\rule[-1pt]{5pt}{10pt}}
          \parfillskip=0pt\finalhyphendemerits=0
          \par\medskip}} 
\def\section{\@startsection {section}{1}{\z@}{3.25ex plus 1ex minus
 .2ex}{1.5ex plus .2ex}{\large\bf}}
\def\subsection{\@startsection{subsection}{2}{\z@}{3.25ex plus 1ex minus
 .2ex}{1.5ex plus .2ex}{\normalsize\bf}}
\title{ On stochastic differential equations driven by the renormalized square of the Gaussian white noise}
\author{Bilel Kacem Ben Ammou\footnote{Department of Mathematics, Universtiy of Tunis - El Manar, Street Mohamed Alaya Kacem,  Nabeul - Tunisia. E-mail: \emph{bilelbenammou@gmail.com}}\quad\quad Alberto Lanconelli\footnote{Dipartimento di Matematica, Universit\'a degli Studi di Bari Aldo Moro, Via E. Orabona 4, 70125 Bari - Italia. E-mail: \emph{alberto.lanconelli@uniba.it}}}
\date{\empty}
\begin{document}

\maketitle

\numberwithin{equation}{section}

\bigskip

\begin{abstract}
We investigate the properties of the Wick square of Gaussian white noises through a new method to perform non linear operations on Hida distributions. This method lays in between the Wick product interpretation and the usual definition of nonlinear functions. We prove on It\^o-type formula  and solve stochastic differential equations driven by the renormalized square of the Gaussian white noise. Our approach works with standard assumptions on the coefficients of the equations, Lipschitz continuity and linear growth condition, and produces existence and uniqueness results in the space where the noise lives. The linear case is studied in details and positivity of the solution is proved.
\end{abstract}

Key words and phrases: Gaussian white noise, Hida distributions, stochastic differential equations, Wick product.

AMS 2000 classification: 60H40, 60H10.

\allowdisplaybreaks

\section{Introduction}
The Gaussian white noise $\{W_t\}_{t\geq 0}$ is a generalized process that can be formalized as the distributional time derivative of a standard one dimensional Brownian motion $\{B_t\}_{t\geq 0}$. One way to define its square is through the so-called Wick renormalization 
\begin{eqnarray}\label{wick square}
W_t^{\diamond 2}:=\lim_{\varepsilon\to 0}(W^{\varepsilon}_t)^2-E[(W^{\varepsilon}_t)^2]
\end{eqnarray}
where $W_t^{\varepsilon}$ is a smooth approximation of the white noise $W_t$ and the limit is interpreted in a suitable distributional sense. This object appears naturally in different contexts. For instance, in the paper \cite{Z2} the author considered gradient operators on the classical Wiener space along directions that do not belong to the Cameron-Martin space and obtained among other things the following integration by parts formula
\begin{eqnarray}\label{IBP}
E[D_BF(B)]=E\Big[F(B)\int_0^1W_t^{\diamond 2}dt\Big]
\end{eqnarray}
where $F(B)$ is a functional of the Brownian path belonging to a certain class of regularity and $D_B$ denotes differentiation along the direction $B$, i.e. $D_BF(B):=\lim_{\delta\to 0}\frac{F(B+\delta B)-F(B)}{\delta}$. One encounters quantities like (\ref{wick square}) also in connection with It\^o-type formulas for solutions to certain stochastic partial differential equations. More precisely, consider the  stochastic heat equation driven by an additive Gaussian space-time white noise (see \cite{W})
\begin{eqnarray}\label{SPDE}
\partial_t u(t,x)=\frac{1}{2}\partial_{xx}u(t,x)+W_{tx},\quad t>0,\quad x\in[0,1],
\end{eqnarray}
with initial condition $u(0,x)=0$ and homogeneous Dirichlet boundary conditions . It was proved in \cite{Z} (see also \cite{L3},\cite{L4}) that, if $\{u(t,x)\}_{t\geq 0,x\in [0,1]}$ denotes the unique (weak) solution to (\ref{SPDE}), then for any $l\in C_0^2(]0,1[)$ one has
\begin{eqnarray*}
\langle u(t,\cdot)^2,l\rangle&=&\frac{1}{2}\int_0^t\langle u(s,\cdot)^2,l''\rangle ds+2\int_0^t\langle u(s,\cdot),l dW_s\rangle+\langle E[u(t,\cdot)^2],l\rangle\\
&&-\int_0^t\langle(\partial_xu(s,\cdot))^{\diamond 2},l\rangle ds
\end{eqnarray*} 
where $\langle\cdot,\cdot\rangle$ denotes the inner product in $\mathcal{L}^2([0,1])$. The term $(\partial_xu(s,x))^{\diamond 2}$ in the previous equality is analogous to the one in (\ref{wick square}) since $\partial_xu(t,x)$ is a generalized Gaussian field.\\
The existence of limits of the type (\ref{wick square}) has also been considered in \cite{MR} where the authors investigated the almost sure existence of the limit
\begin{eqnarray*}
\lim_{\varepsilon\to 0}\int_0^{+\infty}\Big(\frac{X_{t+\varepsilon}-X_t}{\varepsilon}\Big)^{\diamond k}g(t)dt
\end{eqnarray*}
where $\{X_t\}_{t\geq 0}$ is a Gaussian process with a covariance satisfying certain assumptions (not satisfied by the standard Brownian motion), $k\geq 0$ and $g$ is a bounded and measurable function with compact support. Observe that for $X_t=B_t$, $k=2$ and $g=1_{[0,1]}$ the above limit would correspond to the integral in the right hand side of (\ref{IBP}).\\ 
We also mention the paper \cite{AFS} where the problem of defining powers of Gaussian white noises is taken from the point of view of quantum probability.\\
The analogy between (\ref{IBP}) and the usual integration by parts formula from the Malliavin calculus (\cite{N})
\begin{eqnarray*}
E[D_gF(B)]=E\Big[F(B)\int_0^1g'(s)dB_s\Big]
\end{eqnarray*}
where $g$ is a deterministic absolutely continuous function, suggests that the quantity $\int_0^1W_t^{\diamond 2}dt$ or more generally the process $t\mapsto\int_0^tW_s^{\diamond 2}ds$ plays in the differential calculus associated to the gradient $D_B$ (from (\ref{IBP})) the role played by $\int_0^tW_sds=B_t$ in the classical Malliavin calculus. There is however  a major difference in these situations: the fact that $\int_0^tW_s^{\diamond 2}ds$ is a generalized process (more precisely, a Hida distribution) and therefore the question of finding It\^o type formulas or studying stochastic differential equations driven by that process is far from being obvious.\\
Consider for instance equations of the type
\begin{eqnarray}\label{intro SDE}
\frac{dY_t}{dt}=b(Y_t)+\sigma(Y_t)\cdot W_t^{\diamond 2}
\end{eqnarray}
for suitable measurable coefficients $b$ and $\sigma$. First of all, note that for $b=0$ and $\sigma=1$ equation (\ref{intro SDE})   is solved by $Y_t=Y_0+\int_0^tW_s^{\diamond 2}ds$ which is a Hida distribution. Therefore, one cannot expect the solution to (\ref{intro SDE}) to be more regular than that. This fact implies the necessity to give a meaning to the nonlinear terms appearing in (\ref{intro SDE}) and to the multiplication between $\sigma(Y_t)$ and $W_t^{\diamond 2}$. One possibility is to interpret all the nonlinear terms in the Wick-product sense (see \cite {HOUZ} and the references quoted there), that means to replace equation (\ref{intro SDE}) with
\begin{eqnarray}\label{Wick SDE}
\frac{dZ_t}{dt}=b^{\diamond}(Z_t)+\sigma^{\diamond}(Z_t)\diamond W_t^{\diamond 2}
\end{eqnarray}
(we refer to the next sections for precise definitions). However, this procedure has at least two important drawbacks: firstly, to define $b^{\diamond}(Z_t)$ and $\sigma^{\diamond}(Z_t)$ one needs the analyticity of the functions $b$ and $\sigma$; secondly, the solution to (\ref{Wick SDE}) usually exists in spaces that are much bigger than the one where the noise lives (such as the Kondratiev spaces). Moreover, from a modeling point of view, solutions to equation (\ref{Wick SDE}) may exhibit behaviors that differs from what is expected to happen for solutions to (\ref{intro SDE}) (see for instance \cite{C}). \\
Our aim in this paper is to introduce a new method to define nonlinear operation on Hida distributions. This method lays in between the Wick product interpretation (\ref{Wick SDE}) and the usual definition of nonlinear functions (\ref{intro SDE}). Our approach requires standard assumptions on the coefficients of the stochastic equations considered (Lipschitz continuity and linear growth condition) and produces existence and uniqueness results in the space where the noise lives. \\
In the recent years, renormalization techniques for solving stochastic (partial) differential equations have attracted the attention of many authors (see \cite{DD}, \cite{EJS}, \cite{GIP}, \cite{H1}, \cite{H2} and the references quoted there). The common basic idea in these references is to smooth the noise, solve the corresponding equation and then try to compute the limit of the solution as the degree of regularization of the noise decreases (the existence of a non trivial limit usually requires a renormalization of the coefficients of the original equations). The way we treat nonlinear functions of distributions and solve related stochastic differential equations follows the same principle: the only technical difference is that we choose a specific regularizing procedure for the noise and then, once we have solved the regularized equation, we apply to it the inverse of the regularizing map utilized before (instead of letting a parameter tend to zero) (see Remark \ref{approximation idea} below). Our procedure deeply depends on the adopted smoothing map but this is intrinsically connected with the construction of the Hida distribution space which is the natural accommodation of the noise. We do not know whether the equations we consider can be handled with the recent theory of regularity structures (\cite{H2}); certainly, the points of view of the two approaches are different in the fact that our notion of distribution is related to the probability space where the noise is defined and not to the state space of the time parameter describing the processes.  \\
The paper is organized as follows: Section 2 is a quick review of the minimal background material needed to treat our problem; Section 3 introduces and describes our new method of performing non linear operations on Hida distributions while in Section 4 and 5 we apply these concepts to deduce an It\^o-type formula and a theorem on existence and uniqueness for solutions to stochastic differential equations driven by the renormalized square of the Gaussian white noise, respectively; finally, in Section 6 we propose few by-products of the introduced concepts.        

\section{Framework}
The aim of this section is to briefly set up the framework utilized to prove our main results. For more details we refer the interested reader to one of the books \cite{HKPS}, \cite{HOUZ}, \cite{Kuo} or to the paper \cite{DPV} where many technical issues are usefully spelled out.\\
Let $(\Omega,\mathcal{F},\mathcal{P})$ be a complete probability space endowed with a standard one dimensional Brownian motion $\{B_t\}_{t\geq 0}$ and denote by $\{\mathcal{F}_t\}_{t\geq 0}$ its augmented natural filtration. Write $\mathcal{G}$ to denote the smallest sigma-algebra containing all the $\mathcal{F}_t$'s for $t\geq 0$. According to the Wiener-It\^o chaos decomposition theorem any $X\in\mathcal{L}^2(\Omega,\mathcal{G},\mathcal{P})$ ($(L^2)$ for short) can be uniquely represented as
\begin{eqnarray}\label{WI}
X=\sum_{n\geq 0}I_n(h_n)
\end{eqnarray}
where $h_n\in\mathcal{L}^2(\mathbb{R}_+^n)$ is a symmetric function and $I_n(h_n)$ stands for the $n$-th order multiple It\^o integral of $h_n$ with respect to the Brownian motion $\{B_t\}_{t\geq 0}$. Observe that from (\ref{WI}) one gets
\begin{eqnarray*}
E[X^2]=\sum_{n\geq 0}n!|h_n|^2_{\mathcal{L}^2(\mathbb{R}_+^n)}
\end{eqnarray*}
(here $E[\cdot]$ denotes the expectation on the probability space $(\Omega,\mathcal{G},\mathcal{P})$). For $r\in [1,+\infty]$ we also set $(L^r):=\mathcal{L}^r(\Omega,\mathcal{G},\mathcal{P})$, the classic Lebesgue spaces over the measure space $(\Omega,\mathcal{G},\mathcal{P})$.\\
We are now going to introduce the Schwartz space over $\mathbb{R}_+$ and an analogous class of smooth random variables. Consider the differential operator
\begin{eqnarray*}
A:=-\frac{d}{dt}t\frac{d}{dt}+\frac{1}{4}t+1
\end{eqnarray*} 
acting on a subset of $\mathcal{L}^2(\mathbb{R}_+)$ and recall that for any $k\geq 0$ one has
\begin{eqnarray}\label{eigenvalues}
A\xi_k=\Big(k+\frac{3}{2}\Big)\xi_k
\end{eqnarray}
where $\{\xi_k\}_{k\geq 0}$ is the complete orthonormal system in $\mathcal{L}^2(\mathbb{R}_+)$ formed by the Laguerre functions. For $p\geq 0$ define $S_p$ to be the Hilbert space of functions in $\mathcal{L}^2(\mathbb{R}_+)$ such that
\begin{eqnarray*}
|f|_p:=|A^p f|_{\mathcal{L}^2(\mathbb{R}_+)}<+\infty
\end{eqnarray*}
(for consistency we will denote from now on the norm $|\cdot|_{\mathcal{L}^2(\mathbb{R}_+)}$ with the symbol $|\cdot|_0$). It is clear from (\ref{eigenvalues}) that for $p<q$ one has $S_q\subset S_p$; one can then prove that 
\begin{eqnarray*}
S:=\bigcap_{p\geq 0}S_p 
\end{eqnarray*} 
endowed with the projective limit topology, coincides with the Schwartz space over $\mathbb{R}_+$ of infinitely differentiable functions that vanish, together with all their derivatives, at $+\infty$ faster than any inverse power (see \cite{DPV}). The dual space of $S$, denoted by $S'$, is the space of tempered distributions over $\mathbb{R}_+$ and it can be represented as the union of the spaces $S_{-p}$ for $p\geq 0$. One of its most representative elements is the Dirac delta distribution $\delta_t$, $t\geq 0$. The dual pairing between $S'$ and $S$ will be denoted by $\langle\cdot,\cdot\rangle$.\\ 
\noindent We now lift this construction to the space $(L^2)$; more precisely, for $X=\sum_{n\geq 0}I_n(h_n)$ and $p\geq 0$ let
\begin{eqnarray*}
\Gamma(A^p)X:=\sum_{n\geq 0}I_n((A^{p})^{\otimes n}h_n)
\end{eqnarray*}
and define $(S_p)$ to be the Hilbert space of those $X$'s such that
\begin{eqnarray*}
\sum_{n\geq 0}n!|(A^p)^{\otimes n}h_n|^2_{\mathcal{L}^2(\mathbb{R}_+^n)}<+\infty.
\end{eqnarray*}
The space 
\begin{eqnarray*}
(S):=\bigcap_{p\geq 0}(S_p) 
\end{eqnarray*} 
endowed with the projective limit topology is called \emph{Hida test function space}. Its dual $(S)^*$, the \emph{Hida distribution space}, accommodates the white noise $W_t:=\frac{dB_t}{dt}$ which can be represented as $W_t:=I_1(\delta_t)$.  We will write $\langle\langle\cdot,\cdot\rangle\rangle$ for the dual pairing between $(S)^*$ and $(S)$.
For $f\in\mathcal{L}^2(\mathbb{R}_+)$ set
\begin{eqnarray*}
\mathcal{E}(f):=\exp\Big\{I_1(f)-\frac{|f|_0^2}{2}\Big\}.
\end{eqnarray*}
It is easy to prove that $\mathcal{E}(f)\in (S)$ for $f\in S$ and that
\begin{eqnarray*}
\langle\langle X,\mathcal{E}(f)\rangle\rangle=\langle\langle Y,\mathcal{E}(f)\rangle\rangle,\quad\mbox{ for all $f\in S$}
\end{eqnarray*}  
implies $X=Y$ in $(S)^*$. The map
\begin{eqnarray*}
f\mapsto (SX)(f):=\langle\langle X,\mathcal{E}(f)\rangle\rangle
\end{eqnarray*}
is called $S$-transform of $X\in (S)^*$. The celebrated characterization theorem (\cite{HKPS}) provides a necessary and sufficient condition for the invertibility of the $S$-transform: let $f\in S\mapsto F(f)\in\mathbb{R}$ be a measurable function such that
\begin{itemize}
\item for all $f,g\in S$ the function $x\in\mathbb{R}\to F(f+xg)$ has an entire analytic extension to the complex plane (denoted with the same symbol);
\item there exist positive constants $K,p$ such that for any $f\in S$ and $z\in\mathbb{C}$ one has $|F(zf)|\leq K\exp\{\frac{|z|^2}{2}|f|_{p}^2\}$.
\end{itemize}
Then there exists $X\in (S)^*$ such that $(S X)(f)=F(f)$. If these conditions are met for the function
\begin{eqnarray*}
f\mapsto\int_a^bS(X_t)(f)dt
\end{eqnarray*}
we will say that the process $\{X_t\}_{t\geq 0}$ is Pettis integrable in $(S)^*$ over the interval $[a,b]$; the value of the integral is denoted by $\int_a^bX_t dt$ and it verifies
\begin{eqnarray*}
\Big\langle\Big\langle\int_a^bX_t dt,\varphi\Big\rangle\Big\rangle=\int_a^b\langle\langle X_t,\varphi\rangle\rangle dt
\end{eqnarray*}
for all $\varphi\in (S)$. Let $\tau\geq 0$; a Hida distribution $X$ will said to be $\mathcal{F}_\tau$-measurable if
\begin{eqnarray}\label{def measurable}
(S X)(f+g)=(S X)(f)
\end{eqnarray}   
for all $f,g\in S$ such that the support of $g$ in contained in $[0,\tau]^c$ (\cite{DPV}). Finally, for $X,Y\in (S)^*$ we write $X\diamond Y$ for the unique element in $(S)^*$ such that 
\begin{eqnarray*}
\langle\langle X\diamond Y,\mathcal{E}(f)\rangle\rangle=\langle\langle X,\mathcal{E}(f)\rangle\rangle\cdot\langle\langle Y,\mathcal{E}(f)\rangle\rangle,
\end{eqnarray*}
for all $f\in S$. The quantity $X\diamond Y$ is named Wick product of $X$ and $Y$.

\section{A renormalized product} 

The following definition introduces the main tool utilized in the investigation presented in this paper.
\begin{definition}\label{main definition}
Let $\varphi:\mathbb{R}\to\mathbb{R}$ be a bounded function and let $X$ belong to  $(S_{-p})\subset
(S)^*$ for some $p\geq 0$. We define
\begin{eqnarray}\label{def nonlinear}
\tilde{\varphi}_{p}(X):=\Gamma(A^{p})(\varphi(\Gamma(A^{-p})X)).
\end{eqnarray}
\end{definition}
The idea behind this definition is clear: we take a Hida distribution $X$; by construction there exists a $p\geq 0$ such that $X\in (S_{-p})$ that means $\Gamma(A^{-p})X\in (L^2)$; now we apply the nonlinear function $\varphi$ to $\Gamma(A^{-p})X$, viewed as a smooth approximation of $X$ and then we "remove" the regularization by applying $\Gamma(A^p)$. Observe that here we cannot remove the regularization by taking the limit as $p$ tends to zero of $\varphi(\Gamma(A^{-p})X)$ since it wouldn't exists without some other renormalization.   \\
The definition of $\tilde{\varphi}_{p}(X)$ does indeed depend on $p$, as it is stressed in the notation. Since $(S_{-p})\subset (S_{-q})$ for $p<q$ we could have chosen a bigger value of the parameter and obtain a different renormalized object. The point is however to prefer the smallest possible exponent in $\Gamma(A^{-p})X$ in order to preserve $X$ as much as possible. If for instance $X\in (L^2)$, we do not need to perform any approximation and we can simply take $p=0$ and get the usual $\varphi(X)$. \\
The assumption of boundedness on $\varphi$ in Definition \ref{main definition} can be clearly relaxed for specific choices of $X$. In general, the quantity $\tilde{\varphi}_{p}(X)$ belongs to $(S_{-p})$, the same space of $X$.
\begin{definition}
Let $X,Y\in (S_{-q})\subset (S)^*$ for some $q\geq 0$: then for any $p>q$ we set 
\begin{eqnarray}\label{def product}
X\ast_p Y:=\Gamma(A^p)(\Gamma(A^{-p})X\cdot\Gamma(A^{-p})Y).
\end{eqnarray}
\end{definition}

\begin{remark}
If $X\in (S_{-q})$ then $\Gamma(A^{-p})X$ belongs to $(L^r)$ for some $r>2$. In fact
\begin{eqnarray*}
\Gamma(A^{-p})X=\Gamma(A^{-p+q})\Gamma(A^{-q})X;
\end{eqnarray*}
by definition $\Gamma(A^{-q})X\in (L^2)$; moreover, be the Nelson hyper-contractive estimate (\cite{Nelson}) the operator $\Gamma(A^{-p+q})$ maps $(L^2)$ into a smaller $(L^r)$ for some $r>2$. This is necessary to guarantee that the product $\Gamma(A^{-p})X\cdot\Gamma(A^{-p})Y$ belongs to $(L^u)$ for some $u>1$.
\end{remark}
The product defined in (\ref{def product}) is commutative, associative and distributive with respect to the sum. It was introduced in a slightly different form in \cite{DLS} in connection to Wong-Zakai approximation theorems and utilized subsequently in \cite{DLS2} in the study of certain generalizations of the Poincar\'e inequality. It is instructive to observe (see \cite{DLS}) that 
\begin{eqnarray*}
\lim_{p\to +\infty}X\ast_p Y=X\diamond Y\quad\mbox{ in }(S)^*.
\end{eqnarray*}  
Therefore the product $\star_p$ is collocated between the ordinary product (when $p=0$) and the Wick product (when $p=+\infty$).

\section{It\^o-type formula for the quadratic white noise process}

In this section we are going to prove a chain-rule formula for the object of our investigation, i.e. $\int_0^tW_s^{\diamond 2}ds$. We begin with the following technical lemma.

\begin{lemma}\label{technical lemma}
For $p>\frac{1}{2}$ the function $t\mapsto |A^{-p}\delta_t|_0^2$ is continuous. For $p>1$ the process $\{\Gamma(A^{-p})W_t\}_{t\geq 0}$ is almost surely continuous. 
\end{lemma}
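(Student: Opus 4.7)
The proof splits into two independent parts. For the first claim, my plan is to work with the spectral expansion: since $\langle\delta_t,\xi_k\rangle=\xi_k(t)$, one has (weakly) $\delta_t=\sum_k\xi_k(t)\xi_k$, and applying the spectral resolution of $A^{-p}$ yields
\begin{equation*}
|A^{-p}\delta_t|_0^2=\sum_{k\geq 0}\Big(k+\tfrac{3}{2}\Big)^{-2p}\xi_k(t)^2.
\end{equation*}
I would then invoke the classical Szeg\H o bound $|\xi_k(t)|=|e^{-t/2}L_k(t)|\leq 1$, valid for every $k\geq 0$ and $t\geq 0$ (a consequence of the Laguerre polynomial estimate $|L_k(t)|\leq e^{t/2}$). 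The Weierstrass M-test does the rest: each summand is continuous in $t$, and the series is uniformly dominated by $\sum_k(k+3/2)^{-2p}$, which converges precisely for $p>1/2$; hence the sum is continuous.

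For the second claim, the process $X^p_t:=\Gamma(A^{-p})W_t=I_1(A^{-p}\delta_t)$ is centered Gaussian, so I plan to apply the Kolmogorov--Chentsov criterion, exploiting Gaussianity to upgrade an $L^2$ estimate to arbitrary moments: once $E[(X^p_t-X^p_s)^2]\leq C|t-s|^\gamma$ with $\gamma>0$ is known, one gets $E[|X^p_t-X^p_s|^{2n}]\leq c_nC^n|t-s|^{n\gamma}$ for every $n$, which meets Kolmogorov's hypothesis for $n$ large.

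The technical heart is thus the $L^2$ estimate. By the It\^o isometry,
\begin{equation*}
E[(X^p_t-X^p_s)^2]=|A^{-p}(\delta_t-\delta_s)|_0^2=\sum_k\Big(k+\tfrac{3}{2}\Big)^{-2p}(\xi_k(t)-\xi_k(s))^2,
\end{equation*}
and I would combine two bounds on $\xi_k(t)-\xi_k(s)$. First, $|\xi_k(t)-\xi_k(s)|\leq 2$ follows from $\|\xi_k\|_\infty\leq 1$. Second, the derivative estimate $|\xi_k'(t)|\leq k+1/2$ can be read off from $\xi_k'(t)=e^{-t/2}(L_k'(t)-L_k(t)/2)$, the identity $L_k'=-L_{k-1}^{(1)}$, and Szeg\H o's bound $|L_{k-1}^{(1)}(t)|\leq k e^{t/2}$, giving $|\xi_k(t)-\xi_k(s)|\leq(k+1/2)|t-s|$. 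Interpolating by $\min(a,b)\leq\sqrt{ab}$ leads to $(\xi_k(t)-\xi_k(s))^2\leq 2(k+1/2)|t-s|$, whence
\begin{equation*}
E[(X^p_t-X^p_s)^2]\leq 2|t-s|\sum_k\Big(k+\tfrac{3}{2}\Big)^{-2p}\Big(k+\tfrac{1}{2}\Big).
\end{equation*}
The last series converges if and only if $p>1$, and Kolmogorov--Chentsov then produces an almost surely continuous modification.

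The main obstacle I anticipate is the precise growth rate of $\|\xi_k'\|_\infty$ in $k$: the threshold $p>1$ is dictated exactly by the summability of $\sum_k k^{1-2p}$ arising from the interpolation between uniform boundedness and the linear-in-$k$ derivative bound, and any weakening of the derivative estimate would force a larger value of $p$.
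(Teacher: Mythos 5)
Your first part coincides with the paper's argument: both expand $\delta_t$ in the Laguerre basis, write $|A^{-p}\delta_t|_0^2=\sum_{k\geq 0}(k+\frac{3}{2})^{-2p}\xi_k(t)^2$, use $\sup_{t\geq 0}|\xi_k(t)|\leq 1$, and conclude uniform convergence (hence continuity) for $p>\frac{1}{2}$ by the Weierstrass M-test. For the second part you take a genuinely different route. The paper stays with the series representation $\Gamma(A^{-p})W_t=\sum_{k\geq 0}(k+\frac{3}{2})^{-p}\xi_k(t)I_1(\xi_k)$ and observes that, the $I_1(\xi_k)$ being i.i.d.\ standard Gaussians with $\sum_k (k+\frac{3}{2})^{-p}E[|I_1(\xi_k)|]<+\infty$ for $p>1$, the series converges absolutely and uniformly in $t$ almost surely, so its sum is continuous; no increment estimates are needed. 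You instead establish the $L^2$ bound $E[(X^p_t-X^p_s)^2]\leq C|t-s|$ --- which requires the extra input $|\xi_k'(t)|\leq k+\frac{1}{2}$ obtained from $L_k'=-L_{k-1}^{(1)}$ and Szeg\H{o}'s estimate, combined with the $\min(a,b)\leq\sqrt{ab}$ interpolation --- and then invoke Gaussian moment equivalence and Kolmogorov--Chentsov. Both arguments are correct and land on the same threshold $p>1$ (in your case because $\sum_k k^{1-2p}$ must converge, in the paper's because $\sum_k k^{-p}$ must). The paper's route is shorter and exhibits a canonical continuous version, namely the a.s.\ uniformly convergent series itself; yours costs more Laguerre asymptotics but buys a quantitative modulus, local H\"older continuity of any order below $\frac{1}{2}$. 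Two minor points to keep in mind: Kolmogorov--Chentsov should be applied on each compact $[0,N]$ and the resulting modifications patched together, and it produces a continuous \emph{modification} rather than continuity of a prescribed version --- which you acknowledge, and which is the correct reading of the lemma in any case, since $I_1(A^{-p}\delta_t)$ is only defined up to null sets for each fixed $t$.
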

\begin{proof}
Let $t\geq 0$; then $\delta_t\in S'$ and we can write
\begin{eqnarray*}
\delta_t=\sum_{k\geq 0}\xi_k(t)\xi_k.
\end{eqnarray*}
Then
\begin{eqnarray*}
A^{-p}\delta_t=\sum_{k\geq 0}\Big(k+\frac{3}{2}\Big)^{-p}\xi_k(t)\xi_k
\end{eqnarray*}
and hence
\begin{eqnarray*}
|A^{-p}\delta_t|_0^2=\sum_{k\geq 0}\Big(k+\frac{3}{2}\Big)^{-2p}\xi_k^2(t).
\end{eqnarray*}
Since $\sup_{t\in\mathbb{R}_+}|\xi_k(t)|=1$ (\cite{DPV}) we get that the previous series is uniformly convergent for $p>\frac{1}{2}$ entailing the continuity of the function $t\mapsto |A^{-p}\delta_t|_0^2$.\\
\noindent Now consider
\begin{eqnarray*}
W_t=\sum_{k\geq 0}\xi_k(t)I_1(\xi_k).
\end{eqnarray*}
Since 
\begin{eqnarray*}
\Gamma(A^{-p})W_t=\sum_{k\geq 0}\Big(k+\frac{3}{2}\Big)^{-p}\xi_k(t)I_1(\xi_k)
\end{eqnarray*}
and the $I_1(\xi_k)$'s are independent and identical distributed random variables, we deduce that for $p>1$ the last series is almost surely uniformly convergent and hence the process $\{\Gamma(A^{-p})W_t\}_{t\geq 0}$ is almost surely continuous.
\end{proof}

\noindent Observe that for $t\geq 0$ and $p>\frac{1}{2}$ we have
\begin{eqnarray*} 
\int_{0}^{t}\Vert W_{s}^{\diamond 2}\Vert_{-p}ds&=&\int_{0}^{t} \|(\Gamma(A^{-p})W_{s})^{\diamond 2}\|_{0}ds\\
&=&\int_{0}^{t} \|I_{1}(A^{-p}\delta_{s})^{\diamond 2}\|_{0}ds \\
&=&\int_{0}^{t} \|I_{2}((A^{-p}\delta_{s})^{\otimes 2})\|_{0}ds\\
&=&\int_{0}^{t}(2(|A^{-p}\delta_{s}|_{0}^{2})^{2})^{\frac{1}{2}}ds\\
&=&\sqrt{2}\int_{0}^{t}|A^{-p}\delta_{s}|_{0}^{2} ds\\
&=&\sqrt{2}\int_{0}^{t}\sum_{k\geq 0}\Big(k+\frac{3}{2}\Big)^{-2p}\xi_k(s)^{2}ds\\
&=&\sqrt{2}\sum_{n\geq 0}\Big(k+\frac{3}{2}\Big)^{-2p}\int_{0}^{t}\xi_k(s)^{2}ds\\
&\leq&\sqrt{2}\sum_{k\geq 0}\Big(k+\frac{3}{2}\Big)^{-2p}
\end{eqnarray*}
where the last series is convergent. 

\begin{definition}
For each $t\geq 0$ the process
\begin{eqnarray*}
t\mapsto X_t:=\int_0^tW_s^{\diamond 2}ds
\end{eqnarray*}
is an element of $(S)^*$. We will refer to it as the \emph{quadratic white noise process}.
\end{definition} 

Note that, since
\begin{eqnarray*}
\Gamma(A^{-p})X_t&=&\int_0^t(\Gamma(A^{-p})W_s)^{\diamond 2}ds\\
&=&\int_0^t(\Gamma(A^{-p})W_s)^2-|A^{-p}\delta_s|_0^2ds,
\end{eqnarray*}
we deduce from the previous lemma that the function $t\to\Gamma(A^{-p})X_t$ is almost surely differentiable for $p>1$.\\
\noindent The next theorem is the main result of the present section. Its proof makes use of several formulas relating the Malliavin derivative $D$, translation operator $T$ and Wick product $\diamond$; the books \cite{HOUZ}, \cite{J} and \cite{Kuo} are excellent references for the definition and properties of those operators as well as for the formulas just mentioned.  
\begin{theorem}
Let $\varphi\in C^{3}(\mathbb{R})$ be such that for each $i\in\{0,1,2,3\}$, $\varphi^{(i)}$ has at most polynomial growth at infinity. Then for any $p>1$ one has
\begin{eqnarray*}
\tilde{\varphi}_{p}(X_{t})&=&\varphi(0)+4\int_{0}^{t}\widetilde{\varphi_p'''}(X_s)\star_p \Big(\int_{0}^{s}K_{p}(r,s)dB_{r}\Big)^{\star_p 2}ds+2\int_0^t\widetilde{\varphi_p''}(X_s)\cdot\Big(\int_{0}^{s}K_{p}^{2}(r,s)dr\Big)ds\\
&&+4\int_{0}^{t}\Big(\widetilde{\varphi_p''}(X_{s})\star_p\int_{0}^{s}K_{p}(r,s)dB_{r}\Big)\diamond W_{s}ds+\int_0^t\widetilde{\varphi_p'}(X_{s})\diamond \frac{dX_s}{ds}ds,
\end{eqnarray*}
where $K_{p}(r,s):=\langle\delta_r^p,\delta_s^p\rangle$ and $\delta_u^p:=A^{-p}\delta_u$ for any $u\in\mathbb{R}$. Moreover, $(\int_{0}^{s}K_{p}(r,s)dB_{r})^{\star_p 2}$ is a shorthand notation for $\int_{0}^{s}K_{p}(r,s)dB_{r}\star_p\int_{0}^{s}K_{p}(r,s)dB_{r}$. 
\end{theorem}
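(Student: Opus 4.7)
The strategy is to smooth, apply the deterministic chain rule, reorganize the resulting product via a Malliavin-calculus identity, and then undo the smoothing with $\Gamma(A^p)$. Set $Y_t := \Gamma(A^{-p})X_t = \int_0^t(\Gamma(A^{-p})W_s)^{\diamond 2}\,ds$. From the estimate displayed just before the definition of the quadratic white noise process and from Lemma \ref{technical lemma}, for $p>1$ the map $s \mapsto (\Gamma(A^{-p})W_s)^{\diamond 2} = (\Gamma(A^{-p})W_s)^2 - |A^{-p}\delta_s|_0^2$ is a.s.\ continuous, so $t\mapsto Y_t$ is a.s.\ of class $C^1$ with $dY_t/dt=(\Gamma(A^{-p})W_t)^{\diamond 2}$. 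Since $\varphi\in C^3$ has polynomial growth and $Y_s$ lies in the second Wiener chaos (so has moments of every order), the sample-wise chain rule gives
\begin{equation*}
\varphi(Y_t)=\varphi(0)+\int_0^t\varphi'(Y_s)\cdot(\Gamma(A^{-p})W_s)^{\diamond 2}\,ds.
\end{equation*}
Applying $\Gamma(A^p)$ to both sides turns the left-hand side into $\widetilde{\varphi}_p(X_t)$ by definition; the task is to analyze the right-hand side term-by-term.

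The algebraic heart of the proof is the Wick reorganization of the integrand. With $F:=\varphi'(Y_s)$ and $g:=\delta_s^p=A^{-p}\delta_s$, one has $(\Gamma(A^{-p})W_s)^{\diamond 2}=I_1(g)^{\diamond 2}$, and iterating the Gaussian integration-by-parts identity $F\cdot I_1(g)=F\diamond I_1(g)+\langle DF,g\rangle$ (using that $D$ is a derivation for $\diamond$ and $D_rI_1(g)=g(r)$) produces
\begin{equation*}
F\cdot I_1(g)^{\diamond 2}=F\diamond I_1(g)^{\diamond 2}+2\,\langle DF,g\rangle\diamond I_1(g)+\langle D^2F,\,g^{\otimes 2}\rangle.
\end{equation*}
The required Malliavin derivatives come from $Y_s=\int_0^s I_1(\delta_u^p)^{\diamond 2}\,du$, which yields $D_rY_s=2\int_0^s I_1(\delta_u^p)\,\delta_u^p(r)\,du$ and $D_{r_1}D_{r_2}Y_s=2\int_0^s\delta_u^p(r_1)\delta_u^p(r_2)\,du$. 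Using $K_p(u,s)=\langle\delta_u^p,\delta_s^p\rangle$ and the identification $\int_0^s K_p(u,s)\,I_1(\delta_u^p)\,du=\Gamma(A^{-p})\int_0^s K_p(r,s)\,dB_r$ (obtained by pulling $A^{-p}$ outside the stochastic integral via the symmetry of $A$ and the relation $\int_0^s K_p(u,s)\delta_u\,du=K_p(\cdot,s)\mathbf{1}_{[0,s]}$), I get $\langle DY_s,\delta_s^p\rangle=2\,\Gamma(A^{-p})\int_0^s K_p(r,s)\,dB_r$ and $\langle D^2Y_s,(\delta_s^p)^{\otimes 2}\rangle=2\int_0^s K_p(u,s)^2\,du$. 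Combined with $DF=\varphi''(Y_s)\,DY_s$ and $D^2F=\varphi'''(Y_s)\,DY_s\otimes DY_s+\varphi''(Y_s)\,D^2Y_s$ this gives explicit expressions for all three summands.

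Finally I apply $\Gamma(A^p)$ termwise, relying on two structural properties: $\Gamma(A^p)$ is a morphism for the Wick product (because $\diamond$ acts on chaos expansions via symmetrized tensor products, which commute with $(A^p)^{\otimes n}$), and $\Gamma(A^p)(U\cdot V)=\Gamma(A^p)U\star_p\Gamma(A^p)V$ directly from the definition of $\star_p$. The pure Wick term becomes $\widetilde{\varphi_p'}(X_s)\diamond W_s^{\diamond 2}=\widetilde{\varphi_p'}(X_s)\diamond\tfrac{dX_s}{ds}$; the middle term becomes $4\bigl(\widetilde{\varphi_p''}(X_s)\star_p\int_0^s K_p(r,s)\,dB_r\bigr)\diamond W_s$; and the $\langle D^2F,g^{\otimes 2}\rangle$ term splits into $4\,\widetilde{\varphi_p'''}(X_s)\star_p\bigl(\int_0^s K_p(r,s)\,dB_r\bigr)^{\star_p 2}$ from the $\varphi'''$ piece and $2\,\widetilde{\varphi_p''}(X_s)\cdot\int_0^s K_p(u,s)^2\,du$ from the $\varphi''$ piece (which, being multiplied by a deterministic scalar, commutes trivially with $\Gamma(A^p)$). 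Integration over $[0,t]$ then reproduces the four terms in the statement. The main obstacle I expect is not the algebra but the bookkeeping: one must verify Pettis integrability in $(S)^*$ of every summand and justify the interchange of $\Gamma(A^p)$ with the $s$-integrals, using the polynomial growth of the derivatives of $\varphi$ together with the fact that $Y_s$, $\int_0^s K_p(r,s)\,dB_r$ and their ordinary products belong to $(L^r)$ for arbitrarily large $r$ by Nelson's hyper-contractive estimate and the finite-chaos structure.
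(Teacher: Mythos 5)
Your proof is correct and arrives at exactly the same four terms, but by a genuinely different route. The paper never invokes the Malliavin multiplication formula $F\cdot I_2(g^{\otimes 2})=F\diamond I_2(g^{\otimes 2})+2\langle DF,g\rangle\diamond I_1(g)+\langle D^2F,g^{\otimes 2}\rangle$ as a known fact; instead it takes the $S$-transform, uses Girsanov to write $\langle\langle\tilde{\varphi}_p(X_t),\mathcal{E}(f)\rangle\rangle=E[\varphi(\Gamma(A^{-p})T_fX_t)]$, differentiates in $t$ so that the translated square $(W_t^p+f(t))^{\diamond 2}$ splits into three pieces $\mathcal{A},\mathcal{B},\mathcal{C}$, and then undoes the translation via $E[T_{A^pf}G\cdot H]=E[G\cdot(H\diamond\mathcal{E}(A^pf))]$ followed by integration by parts against $\mathcal{E}(A^pf)$. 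Your three summands (the $k=0,1,2$ derivatives hitting $F=\varphi'(Y_s)$) are precisely the paper's $\mathcal{C}$, $\mathcal{B}$, $\mathcal{A}$; in effect the paper's $S$-transform computation is a from-scratch derivation of the product formula you quote, and your Malliavin-derivative computations for $D_{\delta_s^p}Y_s$ and $D^2_{\delta_s^p}Y_s$ coincide with the paper's. What your route buys is a cleaner, test-function-free argument in which the smooth/apply-chain-rule/unsmooth structure and the role of $\Gamma(A^p)$ (as a $\diamond$-morphism and as the very definition of $\star_p$) are fully transparent. What the paper's route buys is that the distributional interpretation comes for free: every identity is an identity of scalar functions $f\mapsto(S\,\cdot\,)(f)$, so the $ds$-integrals are read as Pettis integrals and one never has to interchange the unbounded operator $\Gamma(A^p)$ with $\int_0^t\cdot\,ds$ --- exactly the bookkeeping you defer to your final sentence, and which would need to be written out (via the polynomial growth of $\varphi^{(i)}$, the finite-chaos structure, and hypercontractivity, as you indicate) to make your version complete. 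One shared technicality worth noting: the paper's definition of $\star_p$ formally requires its factors to lie in $(S_{-q})$ for some $q<p$, while both your argument and the theorem statement apply it to elements of $(S_{-p})$ itself.
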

\begin{proof}
We start applying the $S$-transform; for $f\in S$ one has
\begin{eqnarray*}
 \langle\langle \widetilde{\varphi_{p}}(X_{t}),\mathcal{E}(f)\rangle\rangle&=& \langle\langle
\Gamma(A^{p})\varphi(\Gamma(A^{-p})X_{t}),\mathcal{E}(f)\rangle\rangle\\
&=&E[\varphi(\Gamma(A^{-p})X_{t})\mathcal{E}(A^{p}f)] \\
&=&E[T_{A^{p}f}\varphi(\Gamma(A^{-p})X_{t})] \\
&=& E[\varphi(T_{A^{p}f}\Gamma(A^{-p})X_{t})]\\
&=&E[\varphi(\Gamma(A^{-p})T_{f}X_{t})].
\end{eqnarray*}
Here we utilized the Girsanov theorem and a commutation relation between the translation operator $T$ and the operator $\Gamma(A^{-p})$. Now observe that
\begin{eqnarray*}
T_{f}X_{t}=\int_{0}^{t}(W_{s}+f(s))^{\diamond2}ds=\int_{0}^{t}W^{\diamond2}_{s}ds+2\int_{0}^{t}W_{s}f(s)ds+\int_{0}^{t}f^{2}(s)ds
\end{eqnarray*}
and
\begin{eqnarray*}
\Gamma(A^{-p})T_{f}X_{t}=\int_{0}^{t}(\Gamma(A^{-p})W_{s})^{\diamond2}ds+2\int_{0}^{t}\Gamma(A^{-p})W_{s}\cdot f(s)ds+\int_{0}^{t}f^{2}(s)ds.
\end{eqnarray*}
To ease the notation, write $W_t^p$ for $\Gamma(A^{-p})W_t$; therefore
\begin{eqnarray*}
\langle\langle
\tilde{\varphi}_p(X_{t}),\mathcal{E}(f)\rangle\rangle&=&E[\varphi(\Gamma(A^{-p})T_{f}X_{t})]\\
&=&E\Big[\varphi\Big(\int_{0}^{t}(W^p_{s})^{\diamond2}ds+2\int_{0}^{t}W^p_{s}\cdot f(s)ds+\int_{0}^{t}f^{2}(s)ds\Big)\Big].
\end{eqnarray*}
To further facilitate the writing of the next calculation we also set
\begin{eqnarray*}
Y_{t}:=\int_{0}^{t}(W^p_{s})^{\diamond2}ds+2\int_{0}^{t}W^p_{s}\cdot
f(s)ds+\int_{0}^{t}f^{2}(s)ds.
\end{eqnarray*} 
We now apply the usual chain rule (the discussion preceding the statement of the theorem guarantees the differentiability of $Y_t$ while the assumptions on $\varphi$ allow us to differentiate inside the expected value) to obtain
\begin{eqnarray*}
\frac{d}{dt}\langle\langle\tilde{\varphi}_p(X_t),\mathcal{E}(f)\rangle\rangle&=&\frac{d}{dt}E[\varphi(Y_{t})]\\
&=&E[\varphi'(Y_{t})((W^p_{t})^{\diamond2}+2 W^p_{t}\cdot
f(t)+f^{2}(t))]\\
&=&E[\varphi'(Y_{t})(W^p_{t})^{\diamond2}]+2E[\varphi'(Y_{t})W^p_{t}]\cdot f(t)+E[\varphi'(Y_{t})]f^{2}(t)\\
&=&\mathcal{A}+\mathcal{B}+\mathcal{C},
\end{eqnarray*}
where
\begin{eqnarray*}
\mathcal{A}&:=&E[\varphi'(Y_{t})(W^p_{t})^{\diamond2}]\\
\mathcal{B}&:=&2E[\varphi'(Y_{t})W^p_{t}]\cdot f(t)\\
\mathcal{C}&:=&E[\varphi'(Y_t)]\cdot f^{2}(t).
\end{eqnarray*}
Recalling that $Y_t=\Gamma(A^{-p})T_f X_t$ we can write
\begin{eqnarray*}
\mathcal{A}&=&
E[\varphi'(Y_{t})(W^p_{t})^{\diamond2}]\\
&=&E[\varphi'(\Gamma(A^{-p})T_{f}X_{t})(W_{t}^{p})^{\diamond2}]\\
&=&E[T_{A^{p}f}(\varphi'(\Gamma(A^{-p})X_{t}))\cdot (W_{t}^{p})^{\diamond2}]\\
&=&E[\varphi'(\Gamma(A^{-p})X_{t})\cdot ((W_{t}^{p})^{\diamond2}\diamond\mathcal{E}(A^{p}f))].
\end{eqnarray*}
We now integrate twice by parts to obtain
\begin{eqnarray*}
\mathcal{A}&=&E[\varphi'(\Gamma(A^{-p})X_{t})\cdot ((W_{t}^{p})^{\diamond2}\diamond\mathcal{E}(A^{p}f))]\\
&=&E[D^{2}_{\delta_{t}^{p}}(\varphi'(\Gamma(A^{-p})X_{t}))\cdot\mathcal{E}(A^{p}f)]\\
&=&\langle\langle\Gamma(A^{p})D^{2}_{\delta_{t}^{p}}(\varphi'(\Gamma(A^{-p})X_{t})),\mathcal{E}(f)\rangle\rangle
\end{eqnarray*}
that means
\begin{eqnarray*}
\mathcal{A}=(S \Gamma(A^{p})D^{2}_{\delta_{t}^{p}}(\varphi'(\Gamma(A^{-p})X_{t})))(f).
\end{eqnarray*}
Moreover, by simple direct calculations we get
\begin{eqnarray*}
D^{2}_{\delta_{t}^{p}}(\varphi'(\Gamma(A^{-p})X_{t})) &=& D_{\delta_{t}^{p}}(D_{\delta_{t}^{p}}(\varphi'(\Gamma(A^{-p})X_{t}))) \\
&=&D_{\delta_{t}^{p}}(\varphi''(\Gamma(A^{-p})X_{t})\cdot D_{\delta_{t}^{p}}\Gamma(A^{-p})X_{t}) \\
&=& \varphi'''(\Gamma(A^{-p})X_{t})\cdot (D_{\delta_{t}^{p}}\Gamma(A^{-p})X_{t})^{2}+\varphi''(\Gamma(A^{-p})X_{t})\cdot  D^{2}_{\delta_{t}^{p}}\Gamma(A^{-p})X_{t},
\end{eqnarray*}
as well as
\begin{eqnarray*}
D_{\delta_{t}^{p}}\Gamma(A^{-p})X_{t}&=&D_{\delta_{t}^{p}}\Big(\int_{0}^{t}(W_{s}^{p})^{\diamond2}ds\Big) \\
&=& 2\int_{0}^{t}W_{s}^{p}\diamond D_{\delta_{t}^{p}}W_{s}^{p}ds\\
&=& 2\int_{0}^{t}W_{s}^{p}\cdot \langle \delta_{s}^{p},\delta_{t}^{p}\rangle ds\\
&=& 2\Gamma(A^{-p})\Big(\int_{0}^{t}K_{p}(s,t)dB_{s}\Big),
\end{eqnarray*}
and
\begin{eqnarray*}
 D^{2}_{\delta_{t}^{p}}\Gamma(A^{-p})X_{t} &=&D_{\delta_{t}^{p}}\Big(2\int_{0}^{t}W_{s}^{p}K_{p}(s,t)ds\Big)  \\
 &=& 2\int_{0}^{t}K_{p}^{2}(s,t)ds.
\end{eqnarray*}
Plugging all the preceding quantities together we deduce
\begin{eqnarray*}
\Gamma(A^{p})D^{2}_{\delta_{t}^{p}}(\varphi'(\Gamma(A^{-p})X_{t})) &=&\Gamma(A^{p})\Big(\varphi'''(\Gamma(A^{-p})X_{t})\cdot \Big(2\Gamma(A^{-p})\int_{0}^{t}K_{p}(s,t)dB_{s}\Big)^2\Big)\\
&&+\Gamma(A^{p})\Big(\varphi''(\Gamma(A^{-p})X_{t})\cdot 2\int_{0}^{t}K_{p}^{2}(s,t)ds\Big)\\
&=&4\widetilde{\varphi_p'''}(X_t)\star_{p}\Big(\int_{0}^{t}K_{p}(s,t)dB_{s}\Big)^{\star_p 2}+2\widetilde{\varphi_p''}(X_t)\cdot\int_{0}^{t}K_{p}^{2}(s,t)ds.
\end{eqnarray*}
Similarly for the second term,
\begin{eqnarray*}
\mathcal{B}&=&2E[\varphi'(Y_{t})W^p_{t}]\cdot f(t)\\
&=&2 E[T_{A^{p}f}(\varphi'(\Gamma(A^{-p})X_{t})) W_{t}^{p}]\cdot f(t)\\
&=&2 E[\varphi'(\Gamma(A^{-p})X_{t})\cdot
(W_{t}^{p}\diamond\mathcal{E}(A^{p}f)) ]\cdot f(t)\\
&=&2 E[D_{\delta_{t}^{p}}(\varphi'(\Gamma(A^{-p})X_{t}))\cdot\mathcal{E}(A^{p}f)
]\cdot f(t)\\
&=&2 \langle\langle\Gamma(A^{p})D_{\delta_{t}^{p}}(\varphi'(\Gamma(A^{-p})X_{t})),\mathcal{E}(f)\rangle\rangle\cdot f(t).
\end{eqnarray*}
That means
\begin{eqnarray*}
\mathcal{B}=2(S \Gamma(A^{p})D_{\delta_{t}^{p}}(\varphi'(\Gamma(A^{-p})X_{t}))\diamond
W_{t})(f);
\end{eqnarray*}
moreover
\begin{eqnarray*}
2\Gamma(A^{p})D_{\delta_{t}^{p}}(\varphi'(\Gamma(A^{-p})X_{t}))\diamond
W_{t}&=&2\Gamma(A^{p})(\varphi''(\Gamma(A^{-p})X_{t})D_{\delta_t^p}\Gamma(A^{-p})X_t)\diamond W_{t}\\
&=& 4\Gamma(A^p)\Big(\varphi''(\Gamma(A^{-p})X_t)\cdot\Gamma(A^{-p})\int_0^tK_p(s,t)dB_s\Big)\diamond W_t\\
&=& 4 \Big(\widetilde{\varphi'''}_{p}(X_{t})\star_p \Big(\int_{0}^{t}K_{p}(s,t)dB_{s}\Big)\Big)\diamond W_{t}.
\end{eqnarray*}
The third term is simply
\begin{eqnarray*}
\mathbf{C}=(S\tilde{\varphi}'_{p}(X_{t})\diamond W_{t}^{\diamond2})(f).
\end{eqnarray*}
Finally, we can write
\begin{eqnarray*}
\langle\langle\tilde{\varphi}_p(X_t),\mathcal{E}(f)\rangle\rangle&=&\varphi(0)+\int_0^t\frac{d}{ds}\langle\langle\tilde{\varphi}_p(X_s),\mathcal{E}(f)\rangle\rangle ds\\
&=&\varphi(0)+\int_0^t\frac{d}{ds}E[\varphi(Y_s)] ds\\
&=&\varphi(0)+\int_0^t\mathcal{A}+\mathcal{B}+\mathcal{C}  ds\\
&=&\varphi(0)+\int_0^t\Big\langle\Big\langle4\widetilde{\varphi_p'''}(X_s)\star_{p}\Big(\int_{0}^{s}K_{p}(r,s)dB_{r}\Big)^{\star_p 2},\mathcal{E}(f)\Big\rangle\Big\rangle ds\\
&&+\int_0^t\Big\langle\Big\langle2\widetilde{\varphi_p''}(X_s)\cdot\int_{0}^{s}K_{p}^{2}(r,s)dr,\mathcal{E}(f)\Big\rangle\Big\rangle ds\\
&&+\int_0^t\Big\langle\Big\langle4 \Big(\widetilde{\varphi_p'''}(X_{s})\star_p \Big(\int_{0}^{s}K_{p}(r,s)dB_{r}\Big)\Big)\diamond W_{s},\mathcal{E}(f)\Big\rangle\Big\rangle ds\\
&&+\int_0^t\langle\langle\tilde{\varphi}'_{p}(X_{s})\diamond W_{s}^{\diamond2},\mathcal{E}(f)\rangle\rangle ds.
\end{eqnarray*}
This completes the proof.
\end{proof}

\begin{example}
Choose $\varphi(x)=x^2$; then from the previous theorem we get 
\begin{eqnarray*}
X_t^{\star_p 2}&=&4\int_{0}^{t}\int_0^sK_{p}^{2}(r,s)dr ds+8\int_{0}^{t}\int_0^sK_{p}(r,s)dB_rdB_s+2\int_0^tX_s\diamond\frac{dX_s}{ds}ds \\
&=&4\int_{0}^{t}\int_0^sK_{p}^{2}(r,s)dr ds+8\int_{0}^{t}\int_0^sK_{p}(r,s)dB_rdB_s+X_t^{\diamond 2}\\
\end{eqnarray*}
or equivalently
\begin{eqnarray*}
X_t^{\star_p 2}-X_t^{\diamond 2}&=&4\int_{0}^{t}\int_0^sK_{p}^{2}(r,s)dr ds+8\int_{0}^{t}\int_0^sK_{p}(r,s)dB_rdB_s.
\end{eqnarray*}
Since the right hand side of the last formula is a smooth random variable, we deduce that the singular part of $X_t^{\star_p 2}$ coincides with $X_t^{\diamond 2}$. 
\end{example}

\section{Stochastic differential equations driven by the quadratic white noise process}

In this section we want to study stochastic differential equations driven by the quadratic white noise process. We will focus our attention on equations of the form
\begin{eqnarray}\label{SDE}
\frac{dY_t}{dt}&=&\tilde{b}_{p}(Y_{t})+Y_{t}\star_{p}W_{t}^{\diamond2},\quad t\geq0\quad\quad Y_0=x\in\mathbb{R}
\end{eqnarray}
where $p$ is a fixed positive real number, $x\in\mathbb{R}$ and $b:\mathbb{R}\to\mathbb{R}$ is a measurable function. First of all we define what we mean by solving equation (\ref{SDE}).
\begin{definition}\label{def solution} 
Given $T>0$ the process $\{Y_{t}\}_{0\leq t< T}$ is a
solution (up to time $T$) to equation (\ref{SDE}) if the following conditions are satisfied:
\begin{itemize}
\item  For any $t< T$ we have $Y_{t}\in (S_{-p})$
\item The processes 
\begin{eqnarray*}
s\longmapsto \tilde{b}_p(Y_{s}) \quad\mbox{ and }\quad  s\longmapsto Y_{s}\star_{p} W_{s}^{\diamond2}
\end{eqnarray*}
are Pettis integrable in $(S)^*$ over the interval $[0,t]$ for any $t<T$.
\item For any $\varphi\in (S)$ and $t\in [0,T[$ the following identity holds
\begin{eqnarray}\label{equation}
\langle\langle Y_{t},\varphi\rangle\rangle=\Big\langle\Big\langle x+\int_{0}^{t}\widetilde{b_{p}}(Y_{s})ds+\int_{0}^{t}Y_{s}\star_{p} W_{s}^{\diamond2} ds,\varphi\Big\rangle\Big\rangle
\end{eqnarray}
\item The process $\{Y_t\}_{0\leq t<T}$ is $\{\mathcal{F}_t\}_{0\leq t<T}$-adapted.
\end{itemize}
\end{definition}

\noindent The following is the main theorem of the present section.
\begin{theorem}
Let $p>1$ and $b:\mathbb{R}\to\mathbb{R}$ be globally Lipschitz continuous and with at most linear growth at infinity, i.e. 
\begin{eqnarray*}
|b(x)-b(y)|\leq C|x-y|\quad\mbox{ and }\quad|b(x)|\leq C(1+|x|)
\end{eqnarray*}
for some positive constant $C$ and all $x,y\in\mathbb{R}$. Then equation (\ref{SDE}) has a unique solution up to time $T=(4\sup_{r\in\mathbb{R}_+}|A^{-p}\delta_r|_0^2)^{-1}$.
\end{theorem}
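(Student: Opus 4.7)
The strategy is to apply the isomorphism $\Gamma(A^{-p}):(S_{-p})\to(L^2)$, which turns equation (\ref{SDE}) into a classical random ODE in $\mathbb{R}$. Setting $Z_t:=\Gamma(A^{-p})Y_t$ and $W_s^p:=\Gamma(A^{-p})W_s$, and using that $\Gamma(A^{-p})\tilde{b}_p(Y_s)=b(Z_s)$ and $\Gamma(A^{-p})(Y_s\star_p W_s^{\diamond 2})=Z_s\cdot(W_s^p)^{\diamond 2}$ (the latter because $\Gamma(A^{-p})$ commutes with the Wick product at the level of $S$-transforms), equation (\ref{SDE}) becomes
$$Z_t=x+\int_0^t b(Z_s)\,ds+\int_0^t Z_s\bigl((W_s^p)^2-|A^{-p}\delta_s|_0^2\bigr)\,ds.$$
By Lemma \ref{technical lemma}, the coefficient $\Phi_s:=(W_s^p)^2-|A^{-p}\delta_s|_0^2$ is almost surely continuous in $s$ when $p>1$, so pathwise the above is a linear-plus-Lipschitz ODE with continuous coefficient; classical Picard--Lindel\"of gives existence and uniqueness of a pathwise $C^1$ solution on all of $\mathbb{R}_+$, and pathwise uniqueness already yields the uniqueness statement of the theorem.

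The core analytic step is to show $Z_t\in(L^2)$ uniformly for $t\in[0,T)$, equivalently $Y_t\in(S_{-p})$. Variation of constants combined with Gronwall's lemma and the linear growth of $b$, all applied pathwise, produces a bound of the form
$$|Z_t(\omega)|\leq C(t)\Bigl(|x|+\int_0^t\exp\Bigl(\int_s^t\Phi_u(\omega)\,du\Bigr)ds\Bigr)\exp\Bigl(\int_0^t\Phi_s(\omega)\,ds\Bigr)$$
with $C(t)$ deterministic, so after squaring and applying Cauchy--Schwarz the $L^2$ control reduces to the uniform finiteness of $E[\exp(2\int_s^t\Phi_u\,du)]$ for $0\leq s\leq t<T$. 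Since $(W_u^p)^{\diamond 2}=I_2((A^{-p}\delta_u)^{\otimes 2})$, one has the second-chaos representation $\int_s^t\Phi_u\,du=I_2(K_p^{(s,t)})$ with kernel $K_p^{(s,t)}(a,b):=\int_s^t A^{-p}\delta_u(a)A^{-p}\delta_u(b)\,du$, which defines a positive trace-class operator on $\mathcal{L}^2(\mathbb{R}_+)$ whose operator norm is dominated by $(t-s)\sup_{r\in\mathbb{R}_+}|A^{-p}\delta_r|_0^2$. The classical Laplace-transform formula for positive second-chaos elements gives $E[\exp(2I_2(K))]<+\infty$ precisely when $4\|K\|_{\mathrm{op}}<1$, and this is exactly why $T$ is taken to be $(4\sup_r|A^{-p}\delta_r|_0^2)^{-1}$: for every $t<T$ the operator-norm condition holds uniformly in $s\in[0,t]$ and the required exponential moment is finite.

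With the $L^2$ bound secured, $Y_t:=\Gamma(A^p)Z_t\in(S_{-p})$ and the Pettis integrability conditions of Definition \ref{def solution} follow from the growth estimates on the $S$-transforms of $\tilde{b}_p(Y_s)$ and $Y_s\star_p W_s^{\diamond 2}$ obtained along the way, via the characterization theorem. Adaptedness in the Hida sense is more delicate, since $\Gamma(A^{-p})$ does not preserve classical adaptedness; it is best read off from a Picard iteration $Y_t^{(n+1)}=x+\int_0^t\tilde{b}_p(Y_s^{(n)})\,ds+\int_0^t Y_s^{(n)}\star_p W_s^{\diamond 2}\,ds$ starting from $Y_t^{(0)}=x$. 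Using that $W_s^{\diamond 2}$ is Hida-adapted at time $s$ (its $S$-transform at $f$ equals $f(s)^2$) and that both $\tilde{b}_p$ and $\star_p$ preserve Hida-adaptedness at the level of $S$-transforms, each iterate is Hida-adapted and so is its limit, which coincides with the pathwise $Y_t$ constructed above. The principal obstacle throughout is the sharp exponential-moment estimate: it is the interplay between the trace/operator-norm inequality for $K_p^{(s,t)}$ and the finiteness criterion for positive quadratic forms in Gaussian variables that fixes the precise lifespan $T$ appearing in the statement; a secondary subtlety is the verification of Hida-sense adaptedness, which is why the scheme must be run in $(S_{-p})$ rather than only in $(L^2)$.
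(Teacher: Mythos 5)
Your overall strategy --- conjugating by $\Gamma(A^{-p})$ to reduce (\ref{SDE}) to the pathwise random ODE $Z_t=x+\int_0^tb(Z_s)ds+\int_0^tZ_s\Phi_s ds$, solving it by Picard--Lindel\"of and variation of constants, and reducing the $(S_{-p})$-membership of $Y_t=\Gamma(A^p)Z_t$ to an exponential moment of $\int_0^t(W_u^p)^{\diamond 2}du$ --- is exactly the paper's (the paper writes the variation-of-constants solution as $Z_t=V_t^p\exp\{\zeta_t^p\}$ and runs Gronwall on $V_t^p$; uniqueness via injectivity of $\Gamma(A^{-p})$ is also the same). Your treatment of the exponential moment is a genuinely different and correct route: the spectral Laplace-transform formula for a positive second-chaos element, $E[\exp(2I_2(K))]<\infty$ iff $4\|K\|_{\mathrm{op}}<1$, combined with $\|K_p^{(0,t)}\|_{\mathrm{op}}\leq\mathrm{tr}\,K_p^{(0,t)}\leq t\sup_r|A^{-p}\delta_r|_0^2$. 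The paper instead reaches the same threshold $T=(4\sup_r|A^{-p}\delta_r|_0^2)^{-1}$ by a Jensen-inequality trick with the normalized Lebesgue measure on $[0,t]$ (Lemma \ref{jensen}); the two coincide only because you bound the operator norm by the trace, and your argument would in fact yield a slightly longer lifespan if the genuine operator norm were kept.

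The gap is in the adaptedness step. You assert that $\tilde b_p$ and $\star_p$ ``preserve Hida-adaptedness at the level of $S$-transforms,'' but this is precisely the delicate point and it does not follow formally: Hida-measurability of $X$ at time $t$ means $(SX)(f+g)=(SX)(f)$ for $\mathrm{supp}\,g\subset[0,t]^c$, whereas $(S\Gamma(A^p)Z)(f+g)=(SZ)(A^pf+A^pg)$ and $A^pg$ is \emph{not} supported in $[0,t]^c$ --- $A$ does not preserve supports, which is the very reason (as you note) that $\Gamma(A^{-p})$ destroys classical adaptedness. So the support condition cannot be propagated through a Picard iteration by an abstract invariance claim. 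The paper closes this by a direct computation: using Girsanov, $(SY_t)(f)=E\big[T_{A^pf}V_t^p\cdot\exp\big\{\int_0^t(W_s^p)^{\diamond 2}ds+2\int_0^tW_s^pf(s)ds+\int_0^tf(s)^2ds\big\}\big]$, and the crucial cancellation is $T_{A^pf}W_s^p=W_s^p+\langle A^{-p}\delta_s,A^pf\rangle=W_s^p+f(s)$, so the whole expression (including the fixed-point equation satisfied by $T_{A^pf}V_t^p$) depends on $f$ only through its restriction to $[0,t]$; adding a $g$ supported in $[0,t]^c$ then changes nothing. You would need to supply this, or an equivalent computation, to make the adaptedness claim rigorous.
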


\begin{proof}
We begin with the problem of existence of a solution. Let $\{V^p_{t}\}_{t\geq 0}$ be the unique solution of the following
random differential equation
\begin{eqnarray}\label{SDE for V}
\frac{dV^p_t}{dt}=b(V^p_{t}\exp\{\zeta^p_{t}\})\exp\{-\zeta^p_{t}\},\quad t\geq 0\quad\quad V^p_0=x\in\mathbb{R}
\end{eqnarray}
where we set $\zeta^p_s:=\int_0^s(W_r^p)^{\diamond 2}dr$ and $W_r^p:=\Gamma(A^{-p})W_r$. (Recall that by Lemma \ref{technical lemma} the function $s\mapsto\zeta_s^p$ is differentiable; this fact, together with the assumptions on $b$, guarantees the existence of a unique solution to equation (\ref{SDE for V})). Define 
\begin{eqnarray*}
Y_{t}:=\Gamma(A^{p})(\exp\{\zeta^p_{t}\}\cdot V^p_{t}); 
\end{eqnarray*}
we want to prove that $\{Y_t\}_{t\geq 0}$ is a solution to (\ref{SDE}) according to Definition \ref{def solution}.\\
\noindent First of all, by definition
$Y_{t}\in (S_{-p})$ if and only if $\exp\{\zeta^p_{t}\}V^p_{t}\in
(L^{2})$.  To prove this, observe that from (\ref{SDE for V}) we have
\begin{eqnarray*}
 |V^p_{t}| &\leq& |x|+ \int_{0}^{t}|b(V^p_{t}\exp\{\zeta^p_{t}\})|\exp\{-\zeta^p_{s}\}ds\\
  &\leq& |x|+\int_{0}^{t}C(1+|V^p_{s}|\exp\{\zeta^p_{s}\})\exp\{-\zeta^p_{s}\}ds \\
  &=&|x|+C\int_{0}^{t}\exp\{-\zeta^p_{s}\}ds+C\int_{0}^{t}|V^p_{s}|ds.
\end{eqnarray*}
Hence by the Gronwall inequality  we get
\begin{eqnarray}\label{gronwall}
|V^p_{t}|\leq h(t)+C\int_{0}^{t}h(s)e^{C(t-s)}ds
\end{eqnarray}
where we set, for notational convenience, $h(t):=|x|+C\int_{0}^{t}\exp\{-\zeta^p_{s}\}ds$. On the other hand, since
\begin{eqnarray*}
\zeta^p_{s}&=&\int_{0}^{s}(W_{r}^{p})^{\diamond2}dr\\
&=&\int_{0}^{s}(W_{r}^{p})^{2}-|\delta_{r}^{p}|_{0}^{2}dr
\end{eqnarray*}
we can bound as
\begin{eqnarray*}
\exp\{-\zeta^p_{s}\} &=& \exp\Big\{-\int_{0}^{s}(W_{r}^{p})^{2}dr+\int_{0}^{s}|\delta_{r}^{p}|_{0}^{2}dr\Big\}\\
&\leq& \exp\Big\{\int_{0}^{s}|\delta_{r}^{p}|_{0}^{2}dr\Big\}.
\end{eqnarray*}
This last estimate yields that
\begin{eqnarray*}
h(t)\leq |x|+C\int_0^t\exp\Big\{\int_0^s|\delta_r^p|_0^2dr\Big\}ds
\end{eqnarray*}
and hence from inequality (\ref{gronwall}) the boundedness of
$V^p_{t}$ as a function of $\omega\in\Omega$. Therefore, $Y_t\in (S_{-p})$ if and only if $\exp\{\zeta^p_t\}\in (L^2)$. As it is stated in the following lemma, this is true for small enough $t$.
\begin{lemma}\label{jensen}
Let $q\geq 1$. If $t<(2q\sup_{r\in\mathbb{R}_+}|\delta_{r}^{p}|_{0}^{2})^{-1}$, then $\exp\{\zeta^p_{t}\}\in(L^q)$.
\end{lemma}
\begin{proof}
We use the Jensen inequality for the exponential function and the normalized Lebesgue measure: 
\begin{eqnarray*}
E[|\exp\{\zeta_t\}|^q]&=&E\Big[\exp\Big\{q\int_0^t(W_r^p)^2dr-q\int_0^t|\delta_r^p|_0^2dr\Big\}\Big]\\
&\leq&E\Big[\exp\Big\{q\int_{0}^{t}(W_{r}^{p})^{2}dr\Big\}\Big]\\
&=&E\Big[\exp\Big\{qt\cdot\frac{1}{t}\int_{0}^{t}(W_{r}^{p})^{2}dr\Big\}\Big] \\
&\leq&E\Big[\frac{1}{t}\int_{0}^{t}\exp\{qt\cdot
(W_{r}^{p})^{2}\}dr\Big]\\
&=&\frac{1}{t}\int_{0}^{t}E[\exp\{qt\cdot
(W_{r}^{p})^{2}\}]dr \\
&=&\frac{1}{t}\int_{0}^{t}\int_{\mathbb{R}}\exp\{qty^{2}\}\frac{1}{\sqrt{2\pi |\delta_r^p|_0^2}}e^{-\frac{y^{2}}{2 |\delta_r^p|_0^2}}dydr\\
&=&\frac{1}{t}\int_{0}^{t}\int_{\mathbb{R}}\frac{1}{\sqrt{2\pi |\delta_r^p|_0^2}}\exp\Big\{-\frac{y^{2}}{2}\Big(\frac{1}{|\delta_r^p|_0^2}-2qt\Big)\Big\}dydr
\end{eqnarray*}
and the last inner integral is finite if  $t<\frac{1}{2q|\delta_{r}^{p}|_{0}^{2}}$ for all $r\in [0,t]$. The condition $t<(2q\sup_{r\in\mathbb{R}_+}|\delta_{r}^{p}|_{0}^{2})^{-1}$ is therefore sufficient for the inner integral to be finite and the continuity of $r\mapsto|\delta_r^p|_0^2$ implies also the finiteness of the other integral. 
\end{proof}

\noindent Therefore with the help of the previous lemma we deduce that $Y_t\in (S_{-p})$ if $t<T:=(4\sup_{r\in\mathbb{R}_+}|\delta_{r}^{p}|_{0}^{2})^{-1}$.\\
We now prove that for any $t<T$, the process $s\mapsto\tilde{b}_p(Y_s)$ is Pettis integrable over the interval $[0,t]$. Observe that
\begin{eqnarray*}
\tilde{b}_p(Y_s)&=&\Gamma(A^p)b(\Gamma(A^{-p})Y_s)\\
&=&\Gamma(A^p)b(V^p_s\exp\{\zeta^p_s\}).
\end{eqnarray*}
Therefore for any $u>0$, $z\in\mathbb{C}$ and $f\in S$,
\begin{eqnarray*}
\int_{0}^{t}|(S \Gamma(A^p)b(V^p_{s}\exp\{\zeta^p_{t}\}))(zf)|ds&=&\int_{0}^{t}|(S b(V^p_{s}\exp\{\zeta^p_{t}\}))(zA^{p}f)|ds\\ 
&\leq& \int_{0}^{t}\|b(V^p_{s}\exp\{\zeta^p_{s}\})\|_{-u}e^{\frac{|z|^{2}}{2}|A^{p+u}f|_{0}^{2}}ds \\
&\leq& e^{\frac{|z|^{2}}{2}|A^{p+u}f|_{0}^{2}} \int_{0}^{t}\|b(V^p_{s}\exp\{\zeta^p_{s}\})\|_{0}ds \\
&\leq& Ce^{\frac{|z|^{2}}{2}|A^{p+u}f|_{0}^{2}} \int_{0}^{t}1+\|V^p_{s}\exp\{\zeta^p_{s}\}\|_{0}ds\\  
&\leq&Ce^{\frac{|z|^{2}}{2}|A^{p+u}f|_{0}^{2}}\Big(t+\sup_{s\in[0,t]}\|V^p_{s}\|_{(L^{\infty})}\int_{0}^{t}\|\exp\{\zeta^p_{s}\}\|_{0}ds\Big).
\end{eqnarray*}
From (\ref{gronwall}) we get that $\sup_{s\in[0,t]}\|V^p_{s}\|_{(L^{\infty})}$ is finite; for the integral we can write
\begin{eqnarray*}
\int_{0}^{t}\|\exp\{\zeta^p_{s}\}\|_{0}ds&=&\int_0^tE[\exp\{2\zeta^p_s\}]^{\frac{1}{2}}ds\\
&\leq&\int_0^tE\Big[\exp\Big\{2\int_0^s(W_r^p)^2dr\Big\}\Big]^{\frac{1}{2}}ds\\
&\leq&tE\Big[\exp\Big\{2\int_0^t(W_r^p)^2dr\Big\}\Big]^{\frac{1}{2}}
\end{eqnarray*}
and the last expected value is finite by Lemma \ref{jensen}.\\
Now we prove that the process $s\mapsto Y_{s}\star_{p}W_{s}^{\diamond2}$ is Pettis integrable. By definition
\begin{eqnarray*}
Y_{s}\star_{p}W_{s}^{\diamond2}&=&\Gamma(A^{p})(\Gamma(A^{-p})Y_{s}\cdot\Gamma(A^{-p})W_{s}^{\diamond 2})\\
&=& \Gamma(A^{p})(V^p_s\exp\{\zeta^p_{t}\}\cdot (W^p_{s})^{\diamond 2}).
\end{eqnarray*}
Hence for any $u>0$, $z\in\mathbb{C}$ and $f\in S$,
\begin{eqnarray*}
\int_{0}^{t}|(S \Gamma(A^p)(\exp\{\zeta^p_{s}\}V^p_{s}\cdot (W^p_{s})^{\diamond 2}))(zf)|ds&=&\int_{0}^{t}|(S \exp\{\zeta^p_{s}\}V^p_{s}\cdot (W^p_{s})^{\diamond 2})(zA^{p}f)|ds\\  
&\leq& \int_{0}^{t}\|\exp\{\zeta^p_{s}\}V^p_{s}\cdot (W^p_{s})^{\diamond 2}\|_{-u}e^{\frac{|z|^{2}}{2}|A^{p+u}f|_{0}^{2}}ds\\
&\leq& e^{\frac{|z|^{2}}{2}|A^{p+u}f|_{0}^{2}}\sup_{s\in [0,t]}\Vert V^p_s\Vert_{(L^{\infty})}\int_{0}^{t}\|\exp\{\zeta^p_{s}\}\cdot (W^p_{s})^{\diamond2}\|_{0}ds\\
&\leq& e^{\frac{|z|^{2}}{2}|A^{p+u}f|_{0}^{2}}\sup_{s\in [0,t]}\Vert V^p_s\Vert_{(L^{\infty})}\\
&&\cdot\int_{0}^{t}\|\exp\{\zeta^p_{s}\}\|_{(L^{2+\varepsilon})}\cdot\|(W^p_{s})^{\diamond2}\|_{(L^q)}ds
\end{eqnarray*}
where $\varepsilon>0$ and $q>2$ is such that $\frac{1}{2}=\frac{1}{2+\varepsilon}+\frac{1}{q}$ (here we utilized the H\"older inequality). Since $(W^p_{s})^{\diamond2}$ has a finite chaos expansion, by the Nelson's hyper-contractive estimate (\cite{Nelson}) we can bound its norm as
\begin{eqnarray*}
\|(W^p_{s})^{\diamond2}\|_{(L^q)}\leq K\|(W^p_{s})^{\diamond2}\|_{(L^2)}
\end{eqnarray*}
where the constant $K$ depends only on $q$; moreover $\|(W^p_{s})^{\diamond2}\|_{(L^2)}$ is a continuous function of $s$ (since $p>1$). The term $\|\exp\{\zeta^p_{s}\}\|_{(L^{2+\varepsilon})}$ can be treated as before with the help of Lemma \ref{jensen}. All these facts provide the finiteness of the quantity
\begin{eqnarray*}
\int_{0}^{t}\|\exp\{\zeta^p_{s}\}\|_{(L^{2+\varepsilon})}\cdot\|(W^p_{s})^{\diamond2}\|_{(L^q)}ds
\end{eqnarray*} 
and hence the bound 
\begin{eqnarray*}
\int_{0}^{t}|(S \Gamma(A^p)(\exp\{\zeta^p_{s}\}V^p_{s}\cdot (W^p_{s})^{\diamond 2}))(zf)|ds\leq Ce^{\frac{|z|^2}{2}|A^{p+u}f|_0^2}.
\end{eqnarray*}
 
\noindent We now verify equation (\ref{equation}). For all $\varphi\in (S)$ and $t< T$ we have
\begin{eqnarray*}
\langle\langle Y_{t},\varphi\rangle\rangle&=&\langle\langle \Gamma(A^p)(V_t^p\exp\{\zeta_t^p\}),\varphi\rangle\rangle\\
&=&\langle\langle V_t^p\exp\{\zeta_t^p\},\Gamma(A^p)\varphi\rangle\rangle\\
&=&\Big\langle\Big\langle x+\int_0^t\frac{d}{ds}(V_s^p\exp\{\zeta_s^p\})ds,\Gamma(A^p)\varphi\Big\rangle\Big\rangle\\
&=&\Big\langle\Big\langle x+\int_0^t b(V_s^p\exp\{\zeta_s^p\})+V_s^p\cdot (W_s^p)^{\diamond 2}\exp\{\zeta_s^p\}ds,\Gamma(A^p)\varphi\Big\rangle\Big\rangle\\
&=&\Big\langle\Big\langle x+\int_0^t b(\Gamma(A^{-p})Y_s)+\Gamma(A^{-p})Y_s\cdot (W_s^p)^{\diamond 2}ds,\Gamma(A^p)\varphi\Big\rangle\Big\rangle\\
&=&\Big\langle\Big\langle x+\int_{0}^{t}\widetilde{b_{p}}(Y_{s})ds+\int_{0}^{t}Y_{s}\star_{p}
W_{s}^{\diamond2} ds,\varphi\Big\rangle\Big\rangle.
\end{eqnarray*}
(Here we utilized equation (\ref{SDE for V}); moreover the interchange between the unbounded operator $\Gamma(A^p)$ and the integral is allowed by the Pettis integrability proved before)\\

\noindent To conclude the proof we need to check the adaptedness of the process $\{Y_t\}_{t\in [0,T[}$. To do that we compute the $S$-transform of the solution at time $t\in [0,T[$, i.e. $(S Y_t)(f)$, and show that $(S Y_t)(f+g)=(S Y_t)(f)$ for all $f,g\in S$ such that the support of $g$ in contained in $[0,t]^c$. We have
\begin{eqnarray}\label{adapted}
(S Y_t)(f)&=&\langle\langle Y_t,\mathcal{E}(f)\rangle\rangle\nonumber\\
&=&\langle\langle\Gamma(A^p)(V^p_t\exp\{\zeta^p_t\}),\mathcal{E}(f)\rangle\rangle\nonumber\\
&=&\langle\langle V^p_t\exp\{\zeta^p_t\},\mathcal{E}(A^pf)\rangle\rangle\nonumber\\
&=&E[V^p_t\exp\{\zeta^p_t\}\cdot\mathcal{E}(A^pf)]\nonumber\\
&=&E[T_{A^pf}(V^p_t\exp\{\zeta^p_t\})]\nonumber\\
&=&E[T_{A^pf}V^p_t\cdot\exp\{T_{A^pf}\zeta^p_t\})]\nonumber\\
&=&E\Big[T_{A^pf}V^p_t\cdot\exp\Big\{\int_{0}^{t}(W^p_{s})^{\diamond 2}ds+2\int_{0}^{t}W^p_{s}\cdot
f(s)ds+\int_{0}^{t}f^{2}(s)ds\Big\}\Big]
\end{eqnarray}
where in the last equality we utilized the following
\begin{eqnarray*}
T_{A^pf}W_s^p&=&T_{A^pf}\int_{\mathbb{R}_+}(A^{-p}\delta_s)(r)dB_r\\
&=&\int_{\mathbb{R}_+}(A^{-p}\delta_s)(r)(dB_r+(A^pf)(r)dr)\\
&=&\int_{\mathbb{R}_+}(A^{-p}\delta_s)(r)dB_r+\int_{\mathbb{R}_+}(A^{-p}\delta_s)(r)(A^pf)(r)dr\\
&=&W_s^p+f(s).
\end{eqnarray*}
It is clear that the exponential appearing in (\ref{adapted}) remains unchanged if add to $f$ a function $g$ that is identically zero on the interval $[0,t]$. Let us see if the same is true for the term $T_{A^pf}V^p_t$ appearing in (\ref{adapted}); recall that $\{V^p_t\}_{t\in [0,T[}$ solves
\begin{eqnarray*}
V^p_t=x+\int_0^tb(V^p_s\exp\{\zeta^p_s\})\exp\{-\zeta^p_s\}ds.
\end{eqnarray*}
Apply $T_{A^pf}$ to both sides of the equation above to get
\begin{eqnarray}\label{translated V}
T_{A^pf}V^p_t=x+\int_0^tb(T_{A^pf}V^p_s\cdot T_{A^pf}\exp\{\zeta^p_s\})T_{A^pf}\exp\{-\zeta^p_s\}ds;
\end{eqnarray}
if we now replace in equation (\ref{translated V}) the function $f$ with $f+g$ where the support of $g$ is contained in $[0,t]^c$, then by the above mentioned invariance of the exponential appearing in that equation we obtain
\begin{eqnarray}\label{translated V 2}
T_{A^p(f+g)}V^p_t=x+\int_0^tb(T_{A^p(f+g)}V^p_s\cdot T_{A^pf}\exp\{\zeta^p_s\})T_{A^pf}\exp\{-\zeta^p_s\}ds.
\end{eqnarray}
From (\ref{translated V}) and (\ref{translated V 2}) we deduce that $T_{A^pf}V^p_t$ and $T_{A^p (f+g)}V^p_t$ solve the same equation. From uniqueness of the solution we deduce that $T_{A^pf}V^p_t=T_{A^p (f+g)}V^p_t$ and hence that $(S Y_t)(f+g)=(S Y_t)(f)$.\\

\noindent We now prove uniqueness. Assume that $\{Y_{t}\}_{t\in [0,T[}$ is a solution to equation (\ref{SDE}). Then applying the bounded operator $\Gamma(A^{-p})$ we get 
\begin{eqnarray*}
\langle\langle\Gamma(A^{-p})Y_{t},\varphi\rangle\rangle&=&\langle\langle Y_{t},\Gamma(A^{-p})\varphi\rangle\rangle\\
&=&\Big\langle\Big\langle x+\int_{0}^{t}\tilde{b}_p(Y_{s})ds+\int_{0}^{t}Y_{s}\star_p W_{s}^{\diamond2}ds,\Gamma(A^{-p})\varphi\Big\rangle\Big\rangle\\
&=&\Big\langle\Big\langle x+\int_{0}^{t}b(\Gamma(A^{-p})Y_{s})ds+\int_{0}^{t}\Gamma(A^{-p})Y_{s}\cdot\Gamma(A^{-p})W_{s}^{\diamond2}ds,\varphi\rangle\rangle
\end{eqnarray*}
or equivalently
\begin{eqnarray}\label{SDE for Z}
Z_{t}=x+\int_{0}^{t}b(Z_{s})ds+\int_{0}^{t}Z_{s}\cdot\Gamma(A^{-p})W_{s}^{\diamond2}ds\quad\mbox{ in }(S)^*
\end{eqnarray}
where we set $Z_{t}:=\Gamma(A^{-p})Y_{t}$. Observe that equation (\ref{SDE for Z}) is a random differential equation with a unique solution and therefore any other solution to (\ref{SDE}) is mapped by $\Gamma(A^{-p})$ to the same $Z_t$. This fact, together with the injectivity of $\Gamma(A^{-p})$, implies uniqueness of the solution to (\ref{SDE}).
\end{proof}

\begin{remark}
One may wonder whether a similar approach can be utilized to solve equations of the form
\begin{eqnarray*}
Y_t&=&x+\int_0^t\tilde{b}_{p}(Y_{s})ds+\int_0^tY_{s}\star_{p}U_sds,\quad t\geq0
\end{eqnarray*}
where $\{U_s\}_{s\geq 0}$ belongs to a more general class of processes taking values in the space $(S)^*$. It is clear from the proof of the previous theorem that only minor generalizations can be considered in this framework; more precisely, when we prove existence of a solution  in the space $(S)^*$, we have to deal with the problem of checking if the exponential of our smoothed driving noise is square integrable (see Lemma \ref{jensen}). If this noise would have non  zero components in Wiener chaoses of order greater than two, then the square integrability of that exponential would simply fail to be true. From this point of view our driving noise is already a border line case since the solution exists locally in time (due to the restriction imposed by the square integrability requirement).
\end{remark}

\begin{remark}
The quantity $(4\sup_{r\in\mathbb{R}_+} |\delta_r^p|_0^2)^{-1}$, which is an upper bound of the life time of the solution, is increasing with $p$, which is the degree of regularization introduced to define the non linearity $b$. In fact, if $p$ is big then $|\delta_r^p|_0^2$ is small and hence we can solve the equation up to late times.  
\end{remark}

\begin{remark}\label{approximation idea}
Looking through the proof of the previous theorem one can see that the solution $\{Y_t\}_{0\leq t<T}$ to equation (\ref{SDE}) is obtained as $Y_t=\Gamma(A^p)Z_t$ where $\{Z_t\}_{0\leq t<T}$ is the unique solution to 
\begin{eqnarray}\label{regular noise}
\frac{dZ_{t}}{dt}=b(Z_{t})+Z_{t}\cdot\Gamma(A^{-p})W_{t}^{\diamond2}.
\end{eqnarray}
The last equation corresponds to the smoothed version (regularization of the noise) of
\begin{eqnarray}\label{last equation}
\frac{d\hat{Z}_{t}}{dt}=b(\hat{Z}_{t})+\hat{Z}_{t}\cdot W_{t}^{\diamond2}.
\end{eqnarray}
This explains that the procedure utilized in this paper to renormalize and solve equation (\ref{last equation}) amounts at: smoothing the noise in (\ref{last equation}) by applying $\Gamma(A^{-p})$, solving the regularized equation (\ref{regular noise}) and removing the regularization by applying $\Gamma(A^p)$ to the solution to (\ref{regular noise}). 
\end{remark}

\begin{example}
We now want to study in some detail the linear case. Consider the equation
\begin{eqnarray*}
Y_t=1+\int_0^tY_sds+\int_0^tY_s\star_pW_s^{\diamond 2}ds.
\end{eqnarray*}
The unique solution to this equation is given by
\begin{eqnarray*}
Y_t&=&\Gamma(A^p)\exp\Big\{t+\int_0^t(W_s^p)^{\diamond 2}ds\Big\}\\
&=&\Gamma(A^p)\exp\Big\{t+\int_0^t(W_s^p)^2-|\delta_s^p|_0^2ds\Big\}\\
&=&\exp\Big\{t-\int_0^t|\delta_s^p|_0^2ds\Big\}\Gamma(A^p)\exp\Big\{\int_0^t(W_s^p)^{2}ds\Big\}\\
\end{eqnarray*}
where we utilized as before the notation $W_s^p:=\Gamma(A^{-p})W_s$. We want to prove that for any $t\in[0,T[$ the quantity $Y_t$ is a positive Hida distribution. Observe that the unboundedness of the operator $\Gamma(A^p)$ does not guarantee in general the preservation of the positivity of the exponential which is applied to. To prove the desired property we need to show that the function
\begin{eqnarray}\label{PD}
f\in S\mapsto (SY_t)(if)\exp\Big\{-\frac{|f|_0^2}{2}\Big\}
\end{eqnarray}
is positive definite, i.e. for any $z_1,..,z_n\in\mathbb{C}$ and $f_1,...,f_n\in S$ the following inequality must be true:
\begin{eqnarray*}
\sum_{j,l=1}^nz_j (SY_t)(if_j-if_l)\exp\Big\{-\frac{|if_j-if_l|_0^2}{2}\Big\}\bar{z_l}\geq 0
\end{eqnarray*}
where $i$ denotes the imaginary unit and $\bar{\cdot}$ stands for complex conjugation. First of all, observe that
\begin{eqnarray*}
(SY_t)(f)&=&E\Big[\exp\Big\{t-\int_0^t|\delta_s^p|_0^2ds\Big\}\exp\Big\{\int_0^t(W_s^p+f(s))^{2}ds\Big\}\Big]\\
&=&\exp\Big\{t-\int_0^t|\delta_s^p|_0^2ds+\int_0^tf^2(s)ds\Big\}E\Big[\exp\Big\{\int_0^t(W_s^p)^2ds+\int_0^tW_s^pf(s)ds\Big\}\Big]
\end{eqnarray*}
and hence that
\begin{eqnarray*}
(SY_t)(if)\exp\Big\{-\frac{|f|_0^2}{2}\Big\}&=&\exp\Big\{t-\int_0^t|\delta_s^p|_0^2ds-\int_0^tf^2(s)ds-\frac{|f|_0^2}{2}\Big\}\\
&&\cdot E\Big[\exp\Big\{\int_0^t(W_s^p)^2ds+i\int_0^tW_s^pf(s)ds\Big\}\Big].
\end{eqnarray*}
Recall that a product of positive definite functions is positive definite; moreover,
\begin{eqnarray*}
\exp\Big\{-\int_0^tf^2(s)ds\Big\}=E\Big[\exp\Big\{i\int_0^tf(s)dB_s\Big\}\Big]
\end{eqnarray*}
and
\begin{eqnarray*}
\exp\Big\{-\int_0^{+\infty}f^2(s)ds\Big\}=E\Big[\exp\Big\{i\int_0^{+\infty}f(s)dB_s\Big\}\Big]
\end{eqnarray*} 
hence they are both positive definite. The term $\exp\{t-\int_0^t|\delta_s^p|_0^2ds\}$ is positive and it does not depend on $f$. Therefore, to prove the positive definiteness of (\ref{PD}) it remains to verify that condition for 
\begin{eqnarray*}
E\Big[\exp\Big\{\int_0^t(W_s^p)^2ds+i\int_0^tW_s^pf(s)ds\Big\}\Big].
\end{eqnarray*}
But this is easily done; in fact,
\begin{eqnarray*}
&&\sum_{j,l=1}^nz_jE\Big[\exp\Big\{\int_0^t(W_s^p)^2ds+i\int_0^tW_s^p(f_j(s)-f_l(s))ds\Big\}\Big]\bar{z_l}\\
&=&\sum_{j,l=1}^nz_jE\Big[\exp\Big\{\int_0^t(W_s^p)^2ds\Big\}\exp\Big\{i\int_0^tW_s^pf_j(s)ds\Big\}\exp\Big\{-i\int_0^tW_s^pf_l(s)ds\Big\}\Big]\bar{z_l}\\
&=&E\Big[\exp\Big\{\int_0^t(W_s^p)^2ds\Big\}\Big|\sum_{j=1}^nz_j\exp\Big\{i\int_0^tW_s^pf_j(s)ds\Big\}\Big|^2\Big]\\
&\geq&0.
\end{eqnarray*}
One can prove in the same way that the process $\{Y_t\}_{0\leq t< T}$ is actually strongly positive, a notion introduced in \cite{NZ} (more stringent than positivity for Hida distributions) to treat problems about positivity of Wick products and related to measures of convolution type and Poincar\'e inequalities (see \cite{L} and \cite{DLS2}).
\end{example}

\section{Renormalization for distributions from the first Wiener chaos}
In this section we are going to investigate the properties of $\tilde{\varphi_p}(X)$ in the particular case where $X$ belongs for the first Wiener chaos,
 i.e. when $X$ can be written as $I_1(h_1)$ for some $h_1\in S_{-p}$ and $p\geq 0$.  To this aim we recall that for a given real analytic function $\psi:\mathbb{R}\to\mathbb{R}$, $\psi(x)=\sum_{n\geq 0}a_nx^n$ and an element $X\in (S)^*$ we can define 
\begin{eqnarray*}
\psi^{\diamond}(X):=\sum_{n\geq 0}a_nX^{\diamond n}
\end{eqnarray*}
where $X^{\diamond n}:=X\diamond\cdot\cdot\cdot\diamond X$ ($n$-times), provided the above series converges in $(S)^*$. If for instance we take $\psi(x)=\exp\{x\}$ and $X=I_1(h_1)$ for some $h_1\in\mathcal{L}^2(\mathbb{R}_+)$ then we get $\exp^{\diamond}\{I_1(h_1)\}=\mathcal{E}(h_1)$.\\
Let $\{P_t\}_{t\geq 0}$ denotes the one dimensional  heat semigroup, i.e. for $t\geq 0$ and $x\in\mathbb{R}$,
\begin{eqnarray*}
(P_t\varphi)(x):=\int_{\mathbb{R}}\varphi(y)\frac{1}{\sqrt{2\pi t}}\exp\Big\{-\frac{(x-y)^2}{2t}\Big\}dy.
\end{eqnarray*}
Then for any bounded $\varphi:\mathbb{R}\to\mathbb{R}$ and $t>0$ the function $x\mapsto (P_t\varphi)(x)$ is real analytic. Moreover, 
\begin{eqnarray}\label{kuo}
E[\varphi(X(t))^2]=\sum_{n\geq 0}\frac{t^n}{n!}((P_t\varphi)^{(n)}(0))^2
\end{eqnarray}
where $X(t)$ is a Gaussian random variable with mean zero and variance $t$ (see \cite{AKK}).

\begin{proposition}
Let  $h_1\in S_{-p}$ for some $p\geq 0$; then for any bounded function $\varphi:\mathbb{R}\to\mathbb{R}$ one has
\begin{eqnarray*}
\tilde{\varphi}_p(I_1(h_1))=(P_{|A^{-p}h_1|_0^2}\varphi)^{\diamond}(I_1(h_1)).
\end{eqnarray*}
\end{proposition}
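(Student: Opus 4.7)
The strategy is to verify the identity by comparing $S$-transforms on both sides and invoking the characterization theorem. I would fix $f\in S$ and set $\sigma^2 := |A^{-p}h_1|_0^2$.

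First I would compute the $S$-transform of $\tilde{\varphi}_p(I_1(h_1))$. Starting from the definition $\tilde{\varphi}_p(I_1(h_1))=\Gamma(A^p)\varphi(I_1(A^{-p}h_1))$ and using the adjoint relation $\langle\langle\Gamma(A^p)X,\mathcal{E}(f)\rangle\rangle=\langle\langle X,\mathcal{E}(A^p f)\rangle\rangle$ together with the Girsanov-type identity $E[Y\mathcal{E}(g)]=E[T_g Y]$ and the translation rule $T_g I_1(h)=I_1(h)+\langle h,g\rangle$ (both used repeatedly in Section 4), the $S$-transform at $f$ reduces to $E[\varphi(I_1(A^{-p}h_1)+\langle h_1,f\rangle)]$, after simplifying $\langle A^{-p}h_1,A^p f\rangle=\langle h_1,f\rangle$. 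Since $I_1(A^{-p}h_1)$ is centered Gaussian with variance $\sigma^2$, this expectation is by definition of the heat semigroup equal to $(P_{\sigma^2}\varphi)(\langle h_1,f\rangle)$.

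For the right-hand side, the boundedness of $\varphi$ guarantees that $P_{\sigma^2}\varphi$ is real-analytic; write $(P_{\sigma^2}\varphi)(x)=\sum_{n\geq 0}a_n x^n$. Using multiplicativity of the $S$-transform on Wick products, $S[I_1(h_1)^{\diamond n}](f)=\langle h_1,f\rangle^n$, so termwise
$$
S[(P_{\sigma^2}\varphi)^{\diamond}(I_1(h_1))](f)=\sum_{n\geq 0}a_n\langle h_1,f\rangle^n=(P_{\sigma^2}\varphi)(\langle h_1,f\rangle),
$$
matching the left-hand side. Injectivity of the $S$-transform then yields the claim.

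The point requiring actual care---and the one I expect to be the main obstacle---is showing that the Wick series $\sum_n a_n I_1(h_1)^{\diamond n}$ truly defines an element of $(S_{-p})\subset(S)^*$, so that the termwise computation of its $S$-transform is legitimate. Since $I_1(h_1)^{\diamond n}=I_n(h_1^{\otimes n})$ live in distinct Wiener chaoses and $\|I_n(h_1^{\otimes n})\|_{-p}^2=\|I_n((A^{-p}h_1)^{\otimes n})\|_0^2=n!\,\sigma^{2n}$, convergence in $(S_{-p})$ reduces to $\sum_n n!\,\sigma^{2n}a_n^2<\infty$. This is precisely formula (\ref{kuo}) applied at $t=\sigma^2$:
$$
\sum_{n\geq 0}n!\,\sigma^{2n}a_n^2=\sum_{n\geq 0}\frac{\sigma^{2n}}{n!}((P_{\sigma^2}\varphi)^{(n)}(0))^2=E[\varphi(X(\sigma^2))^2]\leq\|\varphi\|_\infty^2<\infty,
$$
which is the essential use of the boundedness hypothesis. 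Everything else is routine manipulation of the tools already established in Sections 2 and 3.
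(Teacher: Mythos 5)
Your proposal is correct and follows essentially the same route as the paper: compute the $S$-transform of $\tilde{\varphi}_p(I_1(h_1))$ via the adjoint relation and Girsanov to get $(P_{|A^{-p}h_1|_0^2}\varphi)(\langle h_1,f\rangle)$, identify this with the $S$-transform of the Wick series, and conclude by injectivity. The only difference is that you explicitly verify convergence of the Wick series in $(S_{-p})$ using formula (\ref{kuo}) and the boundedness of $\varphi$, a step the paper leaves implicit; this is a worthwhile addition rather than a divergence in method.
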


\begin{proof}
By definition 
\begin{eqnarray*}
\tilde{\varphi}_p(I_1(h_1))&=&\Gamma(A^{p})(\varphi(\Gamma(A^{-p})I_1(h_1)))\\
&=&\Gamma(A^{p})(\varphi(I_1(A^{-p}h_1))).
\end{eqnarray*}
We now apply the $S$-transform to obtain
\begin{eqnarray*}
(S\tilde{\varphi}_p(I_1(h_1)))(f)&=&(S\Gamma(A^{p})(\varphi(I_1(A^{-p}h_1))) )(f)\\
&=&(S\varphi(I_1(A^{-p}h_1))) (A^pf)\\
&=&E[\varphi(I_1(A^{-p}h_1))\mathcal{E}(A^pf)]\\
&=&E[\varphi(I_1(A^{-p}h_1)+\langle A^{-p}h_1,A^pf\rangle)]\\
&=&E[\varphi(I_1(A^{-p}h_1)+\langle h_1,f\rangle)]
\end{eqnarray*}
where in the fourth equality we applied the Girsanov theorem. Observe that we can write the last term as
\begin{eqnarray*}
E[\varphi(I_1(A^{-p}h_1)+\langle h_1,f\rangle)]=(P_{|A^{-p}h_1|}\varphi)(\langle h_1,f\rangle).
\end{eqnarray*}
On the other hand, since the function $x\mapsto (P_t\varphi)(x)$ is analytic for any $t>0$ (by the discussion above), one has for any $f\in S$ the identity
\begin{eqnarray*}
(P_{|A^{-p}h_1|}\varphi)(\langle h_1,f\rangle)=\Big(S (P_{|A^{-p}h_1|^2}\varphi)^{\diamond}(I_1(h_1)) \Big)(f).
\end{eqnarray*}
We can then conclude that 
\begin{eqnarray*}
(S\tilde{\varphi}_p(I_1(h_1)))(f)=\Big(S (P_{|A^{-p}h_1|^2}\varphi)^{\diamond}(I_1(h_1)) \Big)(f)
\end{eqnarray*}
for any $f\in S$. The injectivity of the $S$-transform completes the proof.
\end{proof}

\begin{remark}
Observe that, if in the previous proposition we assume $h_1\in\mathcal{L}^2(\mathbb{R}_+)$, then we can choose $p=0$ and obtain
\begin{eqnarray*}
\varphi(I_1(h_1))=(P_{|h_1|^2}\varphi)^{\diamond}(I_1(h_1)).
\end{eqnarray*}
This equality has been already proved in \cite{L2}.
\end{remark}

It is interesting to measure how much $\tilde{\varphi}_p(X)$ differs from $\varphi(X)$ when there is no need of renormalization in the nonlinear function $\varphi$, i.e. when $X$ belongs to $(L^2)$. 
The next theorem provides a result in this direction for $X$ being an element in the first Wiener chaos.

\begin{theorem}
Assume that $\varphi:\mathbb{R}\to\mathbb{R}$ is twice continuously differentiable with bounded derivatives, $h_1\in
\mathcal{L}^{2}(\mathbb{R}_+)$ and $p\geq 0$. Then
\begin{equation}\label{bilel}
\|\varphi(I_1(h_1))-\tilde{\varphi}_p(I_1(h_1))\|_{-p}\leq
C\sup_{x\in\mathbb{R}}|\varphi''(x)|\cdot\frac{|h_1|_{0}^{2}-|A^{-p}h_1|_{0}^{2}}{2}
\end{equation}
where $C$ is a positive constant depending only on $p$ and $h_1$.
\end{theorem}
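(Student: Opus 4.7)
The plan is to use the preceding proposition twice: once to represent both $\varphi(I_1(h_1))$ and $\tilde\varphi_p(I_1(h_1))$ as Wick-composed analytic functions, and once more in reverse after computing the $\|\cdot\|_{-p}$-norm. The resulting expression turns out to be the $L^2$-norm of a bounded function of a Gaussian, which is then controlled by the heat-equation estimate $\|P_t\varphi-\varphi\|_{L^\infty}\le (t/2)\sup_{x\in\mathbb{R}}|\varphi''(x)|$.

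Set $\alpha:=|h_1|_0^2$ and $\beta:=|A^{-p}h_1|_0^2$. The preceding remark (applied with $p=0$) gives $\varphi(I_1(h_1))=(P_\alpha\varphi)^{\diamond}(I_1(h_1))$, while the preceding proposition yields $\tilde\varphi_p(I_1(h_1))=(P_\beta\varphi)^{\diamond}(I_1(h_1))$. Consequently the difference equals $\phi^{\diamond}(I_1(h_1))$ with $\phi:=P_\alpha\varphi-P_\beta\varphi$. From the chaos expansion $\psi^{\diamond}(I_1(h_1))=\sum_{n}a_n I_n(h_1^{\otimes n})$ one reads off the identity $\Gamma(A^{-p})\psi^{\diamond}(I_1(h_1))=\psi^{\diamond}(I_1(A^{-p}h_1))$; since $\|X\|_{-p}=\|\Gamma(A^{-p})X\|_0$, this reduces the problem to estimating $\|\phi^{\diamond}(I_1(A^{-p}h_1))\|_0$.

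The key step is to invert the Wick composition. By the heat semigroup property $P_\alpha=P_{\alpha-\beta}\circ P_\beta$, one has $\phi=P_\beta\chi$ with $\chi:=P_{\alpha-\beta}\varphi-\varphi$. Since $Y:=I_1(A^{-p}h_1)$ is centered Gaussian with variance exactly $\beta=|A^{-p}h_1|_0^2$, a second application of the preceding proposition, now with the bounded nonlinearity $\chi$ and the element $A^{-p}h_1\in\mathcal{L}^2(\mathbb{R}_+)$, gives $(P_\beta\chi)^{\diamond}(Y)=\chi(Y)$. Hence
\[
\|\varphi(I_1(h_1))-\tilde\varphi_p(I_1(h_1))\|_{-p}^2 = E\bigl[(P_{\alpha-\beta}\varphi-\varphi)(Y)^2\bigr]\le\|P_{\alpha-\beta}\varphi-\varphi\|_{L^\infty}^2,
\]
and integrating $\partial_s P_s\varphi=\tfrac12 P_s\varphi''$ on $[0,\alpha-\beta]$, together with the $L^\infty$-contractivity of $P_s$, yields $\|P_{\alpha-\beta}\varphi-\varphi\|_{L^\infty}\le\frac{\alpha-\beta}{2}\sup_{x\in\mathbb{R}}|\varphi''(x)|$, which is the claimed bound (with $C=1$). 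The delicate point is the Wick-inversion step: it requires the auxiliary function $\chi$ to be bounded so that the preceding proposition genuinely applies, and the heat-equation bound just computed furnishes precisely this boundedness.
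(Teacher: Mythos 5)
Your proof is correct, and it takes a genuinely different route from the paper's. After the common first step (representing both $\varphi(I_1(h_1))$ and $\tilde\varphi_p(I_1(h_1))$ as Wick composites via the preceding proposition), the paper expands the difference as a chaos series, applies the Lagrange mean value theorem termwise to the Taylor coefficients $t\mapsto (P_t\varphi)^{(n)}(0)$, and then controls the resulting series by Cauchy--Schwarz, identifying one factor as $E[(\varphi''(Y))^2]^{1/2}$ through the Segal--Bargmann identity (\ref{kuo}) and absorbing the other, a convergent series in $|A^{-p}h_1|_0^2/\tau$, into the constant $C$. You instead exploit the semigroup factorization $P_\alpha=P_{\alpha-\beta}\circ P_\beta$ to write the difference as $(P_\beta\chi)^{\diamond}(I_1(h_1))$ with $\chi=P_{\alpha-\beta}\varphi-\varphi$, and then apply the proposition \emph{in reverse} for the element $A^{-p}h_1$, whose variance is exactly $\beta$, so that the Wick composition collapses to the ordinary composition $\chi(Y)$ in $(L^2)$; the estimate then reduces to the elementary bound $\|P_s\varphi-\varphi\|_{L^\infty}\le \tfrac{s}{2}\sup|\varphi''|$. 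Your argument is cleaner and yields the sharper constant $C=1$, whereas the paper's Cauchy--Schwarz step produces a constant that genuinely depends on $p$ and $h_1$; it also sidesteps the delicate point in the paper's proof where a single intermediate value $\tau$ is used uniformly over all $n$ in the mean value theorem. You correctly flag the one nontrivial hypothesis check, namely that $\chi$ must be bounded for the reverse application of the proposition, and supply it from the same heat-equation estimate. (Both proofs share the minor caveat that the proposition is stated for bounded $\varphi$ while the theorem only assumes bounded derivatives, so the very first representation of $\varphi(I_1(h_1))$ requires the same implicit extension in either approach.)
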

\begin{proof}
According to the previous proposition and remark we can write
\begin{eqnarray*}
\varphi(I_1(h_1))-\tilde{\varphi}_p(I_1(h_1))&=& (P_{|h_1|_{0}^{2}}\varphi)^{\diamond}(I_1(h_1))-(P_{|A^{-p}h_1|_{0}^{2}}\varphi)^{\diamond}(I_1(h_1))\\
&=& \sum_{n\geq 0}\frac{(P_{|h_1|_{0}^{2}}\varphi)^{(n)}(0)-(P_{|A^{-p}h_1|_{0}^{2}}\varphi)^{(n)}(0)}{n!}I_{1}(h_1)^{\diamond n}\\
&=& \sum_{n\geq 0}\frac{(P_{|h_1|_{0}^{2}}\varphi)^{(n)}(0)-(P_{|A^{-p}h_1|_{0}^{2}}\varphi)^{(n)}(0)}{n!}I_{n}(h_1^{\otimes n}).
 \end{eqnarray*}
Now, according to Lagrange Theorem there exists $\tau\in ]|A^{-p}h_1|_{0}^{2},|h_1|_{0}^{2}[$ such that
\begin{eqnarray*}
(P_{|h_1|_{0}^{2}}\varphi)^{(n)}(0)-(P_{|A^{-p}h_1|_{0}^{2}}\varphi)^{(n)}(0)=\frac{1}{2}(P_{\tau}\varphi'')^{(n)}(0)(|h_1|_{0}^{2}-|A^{-p}h_1|_{0}^{2});
\end{eqnarray*}
hence
\begin{eqnarray*}
\varphi(I_1(h_1))-\tilde{\varphi}_p(I_1(h_1))&=&\sum_{n\geq 0}\frac{(P_{|h_1|_{0}^{2}}\varphi)^{(n)}(0)-(P_{|A^{-p}h_1|_{0}^{2}}\varphi)^{(n)}(0)}{n!}I_{n}(h_1^{\otimes
 n})\\
&=&\frac{(|h_1|_{0}^{2}-|A^{-p}h_1|_{0}^{2})}{2}\sum_{n\geq 0}\frac{(P_{\tau}\varphi'')^{(n)}(0)}{n!}I_{n}(h_1^{\otimes n}).
\end{eqnarray*}
We now compute the $\Vert\cdot\Vert_{-p}$-norm to get
\begin{eqnarray*}
\Vert\varphi(I_1(h_1))-\tilde{\varphi}_p(I_1(h_1))\Vert_{-p}&=&\Big\Vert\frac{(|h_1|_{0}^{2}-|A^{-p}h_1|_{0}^{2})}{2}\sum_{n\geq 0}\frac{(P_{\tau}\varphi'')^{(n)}(0)}{n!}I_{n}(h_1^{\otimes n})\Big\Vert_{-p}\\
&\leq&\frac{(|h_1|_{0}^{2}-|A^{-p}h_1|_{0}^{2})}{2}\sum_{n\geq 0}\frac{|(P_{\tau}\varphi'')^{(n)}(0)|}{n!}\Vert I_{n}(h_1^{\otimes n})\Vert_{-p}\\
&=&\frac{(|h_1|_{0}^{2}-|A^{-p}h_1|_{0}^{2})}{2}\sum_{n\geq 0}\frac{|(P_{\tau}\varphi'')^{(n)}(0)|}{n!}\sqrt{n!}|A^{-p}h_1|_0^{n}\\
&=&\frac{(|h_1|_{0}^{2}-|A^{-p}h_1|_{0}^{2})}{2}\sum_{n\geq 0}\frac{|(P_{\tau}\varphi'')^{(n)}(0)|}{\sqrt{n!}}|A^{-p}h_1|_0^{n}\\
&\leq&\frac{(|h_1|_{0}^{2}-|A^{-p}h_1|_{0}^{2})}{2}\Big(\sum_{n\geq 0}\tau^n\frac{|(P_{\tau}\varphi'')^{(n)}(0)|^2}{n!}\Big)^{\frac{1}{2}}\Big(\sum_{n\geq 0}\frac{|A^{-p}h_1|_0^{2n}}{\tau^n}\Big)^{\frac{1}{2}}
\end{eqnarray*}
Since $\tau>|A^{-p}h_1|_0^{2}$ the last series is convergent to a constant $C$ depending on $p$ and $h_1$; moreover, from equation (\ref{kuo}) we can write
\begin{eqnarray*}
\sum_{n\geq 0}\tau^n\frac{|(P_{\tau}\varphi'')^{(n)}(0)|^2}{n!}&=&E[(\varphi''(Y))^2].
\end{eqnarray*}
where $Y$ is a Gaussian random variable with mean zero and variance $\tau$. Therefore,
\begin{eqnarray*}
\Vert\varphi(I_1(h_1))-\tilde{\varphi}_p(I_1(h_1))\Vert_{-p}&\leq&\frac{(|h_1|_{0}^{2}-|A^{-p}h_1|_{0}^{2})}{2}\Big(\sum_{n\geq 0}\tau^n\frac{|(P_{\tau}\varphi'')^{(n)}(0)|^2}{n!}\Big)^{\frac{1}{2}}\Big(\sum_{n\geq 0}\frac{|A^{-p}h_1|_0^{2n}}{\tau^n}\Big)^{\frac{1}{2}}\\
&=&C\frac{|h_1|_{0}^{2}-|A^{-p}h_1|_{0}^{2}}{2}E[(\varphi''(Y))^2]^{\frac{1}{2}}\\
&\leq&C\frac{|h_1|_{0}^{2}-|A^{-p}h_1|_{0}^{2}}{2}\sup_{x\in\mathbb{R}}|\varphi''(x)|.
\end{eqnarray*}
The proof is complete.
\end{proof}

\vspace*{8pt}

\end{document}